\theoremstyle{plain}
\newtheorem{theorem}{Theorem}[section]
\newtheorem{proposition}[theorem]{Proposition}
\newtheorem{lemma}[theorem]{Lemma}
\newtheorem{corollary}[theorem]{Corollary}
\theoremstyle{definition}
\newtheorem{example}[theorem]{Example}
\theoremstyle{remark}
\newtheorem{remark}[theorem]{Remark}
\def\*#1{\mathbf{#1}} %define \*something = \mathbf{something}
\newtheorem*{thma}{\textbf{Theorem A}}
\newtheorem*{thmb}{\textbf{Theorem B}}
\begin{document}
\title[Theorems of Pappus-Guldin]
{Generalizations of the theorems of Pappus-Guldin in the Heisenberg groups}

\author[Y.C. Huang]{Yen-Chang Huang}
\address{Yen-Chang Huang \\ Department of Applied Mathematics\\National University of Tainan \\Tainan, Taiwan}
\email{ychuang@mail.nutn.edu.tw}

%\author[H. Kim]{Hoil Kim}
%\address{Hoil Kim \\ Department of Mathematics \\ Kyungpook  National University \\ Daegu 41566, Korea}
%\email{hikim@knu.ac.kr}
%\thanks{This work was financially supported by KRF 2003-041-C20009}

\subjclass[2010]{Primary: 53C17, Secondary: 53C65, 53C23}
\keywords{Pappus-Guldin Theorem, Sub-Riemannian manifolds, Pseudo-hermitian geometry}

\begin{abstract}
In this paper we study areas (called \textit{p-areas}) and volumes for parametric surfaces in the 3D-Heisenberg group $\mathbb{H}_1$, which is considered as a flat model of pseudo-hermitian manifolds. We derive the formulas of p-areas and volumes for parametric surfaces in $\mathbb{H}_1$ and show that the classical result of Pappus-Guldin theorems for surface areas and volumes hold if the surfaces satisfy some geometric properties. Some examples are also provided, including the surfaces with constant p-mean curvatures.
\end{abstract}

\maketitle

\section{Introduction}
A remarkable formula for the volumes and surface areas of solid objects in $\mathbb{R}^3$ was given by A. Pappus about AD 300 and was rediscovered by P. Guldin in 1641 in many Calculus books. Nowadays the theorem is known as Pappus-Guldin theorem or Pappus theorem. We refer the interested readers to \cite{goodman, GAM} about a short historical review of the development of the theorem, including the early generalizations by Euler and Richter. The papers \cite{bulmer} and \cite{radelet} mentioned the reasons why mathematicians in the 17th century did not know the work of Pappus.

The Pappus-Guldin theorem states the method of finding volumes and surface areas respectively for any solid of revolution into two parts:

\begin{thma}\label{ThmA}
The volume of a solid of revolution $M$ generated by rotating a region $\mathcal{R}$ about a line $\ell$ that does not meet $\mathcal{R}$ is given by
$$Volume(M)= Area(\mathcal{R})\times s_0,$$
where $s_0$ is the perimeter of the circle described by the centroid of $\mathcal{R}$ during the rotation.
\end{thma}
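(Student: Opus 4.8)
The plan is to reduce the statement to an elementary change of variables into cylindrical coordinates. First I would choose coordinates on $\mathbb{R}^3$ so that the axis of revolution $\ell$ becomes the $z$-axis and the plane containing the region $\mathcal{R}$ becomes the $xz$-plane; since $\ell$ does not meet $\mathcal{R}$, we may further arrange that $\mathcal{R}$ lies in the open half-plane $\{x>0\}$, so that every point of $\mathcal{R}$ has a well-defined positive distance $r$ to $\ell$. In these coordinates the solid of revolution is
$$M=\{(r\cos\theta,\ r\sin\theta,\ z):(r,z)\in\mathcal{R},\ \theta\in[0,2\pi)\},$$
and the map $\Phi\colon(r,z,\theta)\mapsto(r\cos\theta,r\sin\theta,z)$ carries $\mathcal{R}\times[0,2\pi)$ onto $M$, with Jacobian determinant equal to $r$.

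Next I would compute the volume by this substitution, using Fubini's theorem to integrate out $\theta$ first:
$$\mathrm{Volume}(M)=\iiint_M dV=\int_0^{2\pi}\!\!\iint_{\mathcal{R}} r\,dr\,dz\,d\theta=2\pi\iint_{\mathcal{R}} r\,dr\,dz.$$
Recalling that the centroid of $\mathcal{R}$ sits at distance $\bar r=\frac{1}{\mathrm{Area}(\mathcal{R})}\iint_{\mathcal{R}} r\,dr\,dz$ from the $z$-axis, the circle it describes during the rotation has perimeter $s_0=2\pi\bar r$. Substituting $\iint_{\mathcal{R}} r\,dr\,dz=\bar r\cdot\mathrm{Area}(\mathcal{R})$ into the previous display yields
$$\mathrm{Volume}(M)=2\pi\bar r\cdot\mathrm{Area}(\mathcal{R})=s_0\cdot\mathrm{Area}(\mathcal{R}),$$
which is the asserted formula.

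The only genuinely delicate point is the measure-theoretic bookkeeping in the change of variables: one must verify that $\Phi$ is injective on the interior $\mathrm{int}(\mathcal{R})\times(0,2\pi)$ and that the set where distinct parameters are identified (the slice $\theta=0$ glued to $\theta=2\pi$, together with any part of $\partial\mathcal{R}$) has three-dimensional Lebesgue measure zero, so that it makes no contribution to the volume integral. This is precisely where the hypothesis that $\ell$ does not meet $\mathcal{R}$ is used in an essential way: it forces $r>0$ throughout $\mathcal{R}$, which both prevents the Jacobian from degenerating on a set of positive measure and makes the phrase ``the region $\mathcal{R}$ in the $(r,z)$ half-plane'' an honest description of the generating region. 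Once these technicalities are dispatched, the computation above is routine. (An equivalent route, avoiding the explicit change of variables, is the method of cylindrical shells: a rectangle of area $dr\,dz$ at distance $r$ from $\ell$ sweeps out a thin shell of volume $2\pi r\,dr\,dz$, and integrating over $\mathcal{R}$ gives the same answer; I would use whichever phrasing meshes best with the later Heisenberg-group computations.)
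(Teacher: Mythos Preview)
Your argument is correct and is in fact the standard textbook proof of the classical Pappus--Guldin volume formula via cylindrical coordinates (or, equivalently, the method of shells). There is nothing mathematically to object to.

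However, there is no proof in the paper to compare against: Theorem~A is quoted in the introduction purely as historical background---a result attributed to Pappus (ca.~AD~300) and Guldin (1641)---and the paper makes no attempt to prove it. The paper's original contributions are Theorem~\ref{mainvolume}, Corollary~\ref{mainvolume2}, Theorem~\ref{mainarea}, and Corollary~\ref{pappsarea}, which establish analogues of Pappus--Guldin in the Heisenberg group $\mathbb{H}_1$ using the p-area and the pseudo-hermitian machinery developed in Sections~\ref{pareavolumeexammples}--\ref{mainareapappsarea}. So while your proof is fine as a self-contained verification of the classical Euclidean statement, it does not correspond to anything the paper itself undertakes to demonstrate.
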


\begin{thmb}\label{ThmB}The surface area of a surface of revolution $\Sigma$ generated by rotating a plane curve $\mathcal{C}_0$ about an axis $\ell$ that does not meet $\mathcal{C}_0$ is given by
$$Area(\Sigma)=Length(\mathcal{C}_0)\times s_0,$$
where $s_0$ is the length of the curve described by the centroid of $\mathcal{C}^0$ during the rotation.
\end{thmb}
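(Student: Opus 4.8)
The plan is to reduce Theorem B to a direct computation of a surface integral. First I would fix coordinates on $\mathbb{R}^3$ so that the axis $\ell$ is the $z$-axis and the plane containing the curve $\mathcal{C}_0$ is the $xz$-plane; since $\ell$ does not meet $\mathcal{C}_0$, after a reflection if necessary we may assume $\mathcal{C}_0$ lies in the open half-plane $\{x>0\}$. Parametrizing $\mathcal{C}_0$ by arc length as $s\mapsto(x(s),z(s))$ for $s\in[0,L]$, with $L=\mathrm{Length}(\mathcal{C}_0)$ and $x(s)>0$, the surface of revolution $\Sigma$ is the image of
\[
\Phi(s,\theta)=\bigl(x(s)\cos\theta,\ x(s)\sin\theta,\ z(s)\bigr),\qquad (s,\theta)\in[0,L]\times[0,2\pi].
\]

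Next I would compute the tangent vectors $\Phi_s=(x'\cos\theta,x'\sin\theta,z')$ and $\Phi_\theta=(-x\sin\theta,x\cos\theta,0)$ and their cross product, obtaining $|\Phi_s\times\Phi_\theta|=x(s)\sqrt{x'(s)^2+z'(s)^2}=x(s)$, where the last equality uses the arc-length normalization $x'^2+z'^2\equiv 1$ and $x(s)>0$. Integrating the area element gives
\[
\mathrm{Area}(\Sigma)=\int_0^{2\pi}\!\!\int_0^{L}x(s)\,ds\,d\theta=2\pi\int_0^{L}x(s)\,ds.
\]
Then I would invoke the definition of the centroid of a curve as the arc-length average of its points: its distance to the $z$-axis is $\bar x=\frac1L\int_0^{L}x(s)\,ds$, hence $\int_0^{L}x(s)\,ds=\bar x\,L$ and
\[
\mathrm{Area}(\Sigma)=(2\pi\bar x)\cdot L=s_0\cdot\mathrm{Length}(\mathcal{C}_0),
\]
because $s_0=2\pi\bar x$ is precisely the circumference of the circle of radius $\bar x$ swept out by the centroid. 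This is the claimed identity.

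Finally I would tie up the two points that make the clean statement legitimate. The hypothesis that $\ell$ avoids $\mathcal{C}_0$ is exactly what forces $x(s)>0$ for all $s$, so that $\Phi$ is a regular parametrization and $\mathrm{Area}(\Sigma)=\iint|\Phi_s\times\Phi_\theta|\,ds\,d\theta$ with no overlap corrections; it also makes ``the curve described by the centroid'' an honest circle of positive radius. Secondly, the computation does not depend on the chosen parametrization: for a general regular parametrization $t\mapsto(x(t),z(t))$ one gets $|\Phi_t\times\Phi_\theta|=x(t)\sqrt{x'(t)^2+z'(t)^2}$, and changing variables back to arc length turns $\int x\sqrt{x'^2+z'^2}\,dt$ into $\int x\,ds$, so nothing is lost. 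I do not anticipate a real obstacle; the only care needed is to keep the notion of centroid as an arc-length average (not, say, the midpoint of the endpoints) consistent throughout, and to note that the same argument with a double integral over the plane region $\mathcal{R}$ in place of the line integral over $\mathcal{C}_0$ proves Theorem A.
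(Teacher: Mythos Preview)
Your proof is correct and is the standard textbook argument for the classical Pappus--Guldin theorem for surfaces of revolution in $\mathbb{R}^3$. Note, however, that the paper does not itself prove Theorem~B: it is quoted in the introduction as a classical result (due to Pappus and Guldin) purely to motivate the paper's actual contributions, which are the Heisenberg-group analogues (Theorem~\ref{mainarea} and Corollary~\ref{pappsarea}). So there is nothing in the paper to compare your argument against; your computation is a perfectly good self-contained proof of the Euclidean statement, but it is not something the paper undertakes.
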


In \cite{goodman} A.W. Goodman and G. Goodman gave a proof of the generalization of Theorem A for domain $M$ generated by moving a plane region $\mathcal{R}$ around an \textit{arbitrary} space curve $\gamma$. When the center of mass of $\mathcal{R}$ is on $\gamma$, they have
$$Volume(M) = Area(\mathcal{R})\times Length(\gamma).$$
However, the authors showed that Theorem B holds only for the surface area of $\Sigma$ generated by moving a plane curve $\mathcal{C}_0$ around a \textit{plane curve} $\gamma$ with "natural motions" (see \cite[p358]{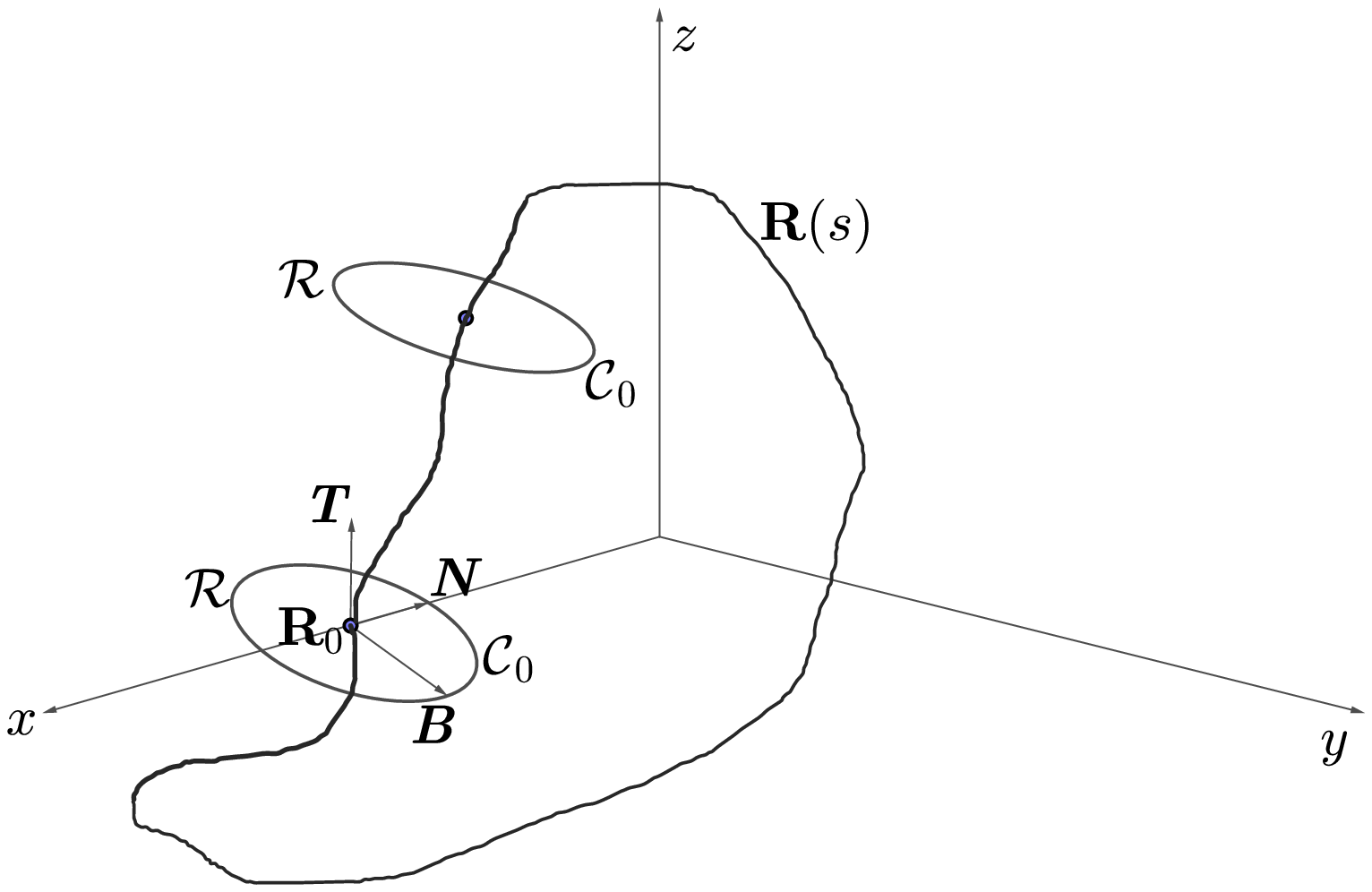}). After that, Flanders \cite{Flanders} and Pursell \cite{Pursell} supplemented the results of surface areas in \cite{goodman} by showing that there exists a unique spin function such that the area of the surface $\Sigma$ generated by $\mathcal{C}_0$, when $\mathcal{C}_0$ moves with the spin, agrees with the result of Theorem B
$$Area(\Sigma)=Length(\mathcal{C}_0)\times Length(\gamma).$$

On one hand, Pursell used the classical method of differential geometry and showed that the "natural motion" in \cite{goodman} is actually a motion with $\mathcal{C}_0$ fixed to a Frenet frame. On the other hand, Flanders obtained the result of Theorem B in a more efficient method by using moving frames and differential forms. In addition, Gray-Miquel \cite{GM} also generalized Pappus-type theorems for volumes and surface areas to the hypersurfaces in the space forms. We also stress that there is a natural connection between Pappus-Guldin formula and the works by H. Weyl. In \cite{weyl1939volume} Weyl proved the formulas of the volumes of a tube $P_r$ with radius $r$ and the corresponding tubular hypersurface $\partial P_r$ around a $q$-dimensional submanifold $P$ in a Euclidean or Spherical space, and finally those formulas were used later in the proof of the generalized Gauss-Bonnet Theorem by Allendoerfer \cite{allend} and Fenchel \cite{fenchel}. We refer the reader to the book \cite{weyl1939volume} and the survey \cite{vanhecke} by Vanhecke for more details about the connection of Weyl's works and Pappus-Guldin theorem.

In the last years the study of variational problems in sub-Riemannian geometry and pseudo-hermitian geometry have received an increasing interest. Especially the desire to achieve a better understanding of questions of global geometry involving the areas and volumes, such as the Plateau problem or the isoperimetric problem, has motivated the recent development of a theory of Pappus-Guldin theorem. More precisely, in the present paper we ask the following questions:\\

\textit{Does Pappus-Guldin theorem hold in the Heisenberg group $\mathbb{H}_1$? What are the appropriate geometric quantities (lengths, surface areas, volumes) satisfying the geometry of $\mathbb{H}_1$ as a standard model of sub-Riemannian manifolds?}\\

The answer is affirmative if we take the following geometric quantities: (1) horizontal lengths for curves, (2) p-area for surface areas (defined in next section), (3) Euclidean volumes for solid domain in $\mathbb{H}_1$. We shall show the generalization of Pappus-Guldin theorem for volumes and surface areas of any tube consisting of a plane region moving along a curve passing through the region.

In \cite[p 385]{goodman} the authors considered the tubes in $\mathbb{R}^3$ constructed by the following method: given a $C^3$-closed curve
$$\textbf{R}(s)=\big(x(s), y(s), z(s)\big)$$
for $s\in [0, s_0]$ and a Frenet frame $\{\textbf{T}(s), \textbf{N}(s), \textbf{B}(s)\}$ defined on $\textbf{R}(s)$, where the unit vectors $\textbf{T}, \textbf{N}, \textbf{B}$ are the tangent, the normal, and the binormal vectors, respectively. By setting $x(0)>0$, $y(0)=z(0)=0$, and the initial point $\textbf{R}(0)=\big(x(0),y(0),z(0)\big)=\big(x(s_0),y(s_0),z(s_0)\big)\in \textbf{R}(s)$, the authors also defined the plane curve by
$$\mathcal{C}_0: \textbf{R}_1(t)=\big(f(t), g(t), 0\big),$$
for some $C^1$-functions $f(t),g(t)$ defined on $[0, t_0]$. Note that the point $\textbf{R}(0)$ is regarded as the origin for describing the curve $\mathcal{C}_0$. Thus, the plane region $\mathcal{R}\ni \textbf{R}(0)$ enclosed by $\mathcal{C}_0$ lies on the $xy$-plane (as shown in Fig. \ref{fig1}). The "natural motion" of the region $\mathcal{R}$ along the curve $\textbf{R}(s)$ is defined by choosing the coordinate system so that at $s=0$, the vectors $\textbf{T}$ and $\textbf{N}$ lie on the $xz$-plane, and $\textbf{N}$ lies on the $x$-axis pointed toward the origin. When the region $\mathcal{R}$ moves along $\textbf{R}(s)$ starting from $\textbf{R}(0)$, $\mathcal{R}$ is always on the plane spanned by $\textbf{N}(s)$ and $\textbf{B}(s)$, and the line segment contained in $\textbf{B}(0)$ at $\textbf{R}(0)$ is always contained in $\textbf{B}(s)$ for any $s$. More precisely, the surface $\Sigma$ generated by moving the plane region $\mathcal{R}$ along the curve $\textbf{R}(s)$ can be represented by the parametrized surface
\begin{align}\label{goodman2}
\mathbb{X}(s,t)=\textbf{R}(s)-f(t)\textbf{N}(s)-g(t)\textbf{B}(s),
\end{align}
Note that when the region $\mathcal{R}$ moves along the curve $\textbf{R}(s)$, $\mathcal{R}$ is always perpendicular to the tangent vector $\textbf{T}$.

\begin{figure*} [h!]
    \centering
    \includegraphics[width = .8\linewidth]{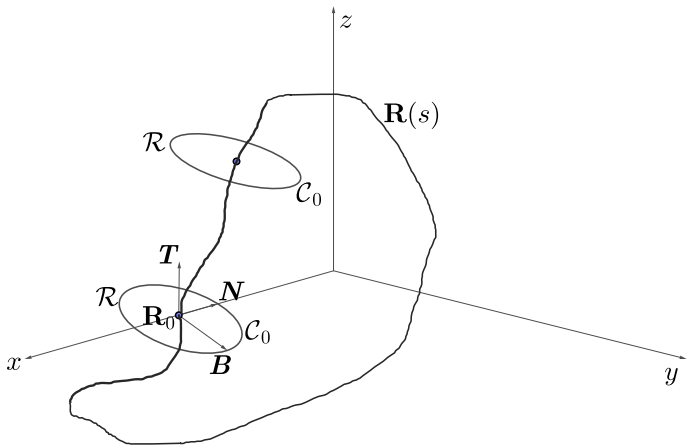}
    \caption{"Natural motion" of the region $\mathcal{R}$ along the closed curve \textbf{R}(s)}
    \label{fig1}
\end{figure*}

Next we begin to set up for this case. Regarding 3-dimensional Heisenberg group $\mathbb{H}_1$ as the standard model of the pseudo-hermitian manifolds, there exists the contact planes $\xi_p$ and the CR structure $J$ defined for all point $p\in \mathbb{H}_1$. For some fundamental background about $\mathbb{H}_1$, we refer the readers to the next section. Unlike the surfaces in \cite{goodman} that the authors only consider the surfaces generated by the vectors $\textbf{N}$ and $\textbf{B}$ in \eqref{goodman2}, in the 3-dimensional Heisenberg group $\mathbb{H}_1$ we consider a more generalized surface
\begin{align}\label{maincurve}
\Sigma: \mathbb{X}(s,t)=\gamma(s)+f(t)U(s)+g(t)V(s)+h(t)T,
\end{align}
for some real-valued functions $f(t), g(t), h(t)$ with suitable regularity. Here $\gamma(s)$ is a horizontally regular curve with horizontal arc-length, i.e., the orthogonal decomposition of the velocity vector $\gamma'(s) =\gamma'_\xi (s) +\gamma'_T(s)$ has the nonzero contact part $\gamma'_\xi(s)\in \xi_{\gamma(s)}$ and $|\gamma'_\xi|=1$ w.r.t. the Levi-metric $\langle X,Y\rangle =\frac{1}{2}d\Theta(X, JY)$ for any vectors $X,Y$ in $\mathbb{H}_1$, and $U(s)=\gamma_\xi(s), V(s)=J \gamma_\xi(s)$, $T=\frac{\partial}{\partial z}$, and $J$ is the CR structure.
Note that $\{U(s), V(s), T\}$ forms an orthonormal basis w.r.t. the Levi-metric. A more natural perspective for the setting of \eqref{maincurve} is that when considering $\mathbb{H}_1$ as the Lie group with the group operation defined by \eqref{groupop} (see next section), the surface $\Sigma$ can be obtained by moving the frame $\{U(0), V(0), T\}$ at the origin along the curve $\gamma(s)$. More precisely, the parametrization of \eqref{maincurve} can be equivalently defined by
\begin{align}\tag{2'}\label{maincurve2}
\Sigma: \mathbb{X}(s,t)=L_{\gamma(s)}\big(f(t)U(0)+g(t)V(0)+h(t)T\big),
\end{align}
where $U(0)$ and $V(0)$ are two fixed unit vectors starting from the origin and ending at somewhere on the $xy$-plane. Indeed, suppose $\gamma(0)=0\in \mathbb{H}_1$ and since $|\gamma'_\xi(s)|=1$, we may set $U(s)=\eta \mathring{e}_1(\gamma(s))+\zeta \mathring{e}_2(\gamma(s))$ and $V(s)=-\zeta \mathring{e}_1(\gamma(s))+\eta \mathring{e}_2(\gamma(s))$ for some real numbers with $\eta^2+\zeta^2=1$. Here $\mathring{e}_1, \mathring{e}_2$ are the standard basis of the invariant vector fields defined by \eqref{standardbasis}. In particular, $U(0)=(\eta, \zeta, 0)$ and $V(0)=(-\zeta, \eta, 0)$. By using the group operation \eqref{groupop} one has
\begin{align*}
L_{\gamma(s)}\big(f(t)U(0)&+g(t)V(0)+h(t)T\big)\\
&=\big(x(s),y(s), z(s)\big)\cdot \big(f(t) \eta-g(t)\zeta, f(t)\zeta +g(t) \eta, h(t)\big) \\
&=\gamma(s)+f(t)\Big[\eta \mathring{e}_1(\gamma(s))+\zeta\mathring{e}_2(\gamma(s))\Big]+g(t)\Big[-\zeta \mathring{e}_1(\gamma(s))+\eta \mathring{e}_2(\gamma(s))\Big]+h(t)T \\
&=\gamma(s)+f(t) U(s)+g(t) V(s) +h(t) T,
\end{align*}
and so \eqref{maincurve} and \eqref{maincurve2} are equivalent.

Analogous to the settings in \cite{goodman}, the surface of revolution $\Sigma$ is defined by rotating the closed space curve
$$\mathcal{C}_s(t): f(t)U(s)+g(t)V(s)+h(t)T$$
for some fixed $s\in [0, s_0]$ along the closed curve
$$\gamma(s)=\big(x(s),y(s), z(s)\big).$$

Throughout this paper we always use the parameter $s\in [0, s_0]$ for the curve $\gamma$ and the frames $U, V$ along $\gamma$, and use the parameter $t\in [0,t_0]$ for the functions $f,g,h,$ describing the curve $\mathcal{C}_s$. For simplicity, we shall also drop out the parameters in derivatives if the notion is clear, for instance, $f'U+gV'$ means $\frac{df(t)}{dt}U(s)+g(t)\frac{dV(s)}{ds}$. We often denote $M$ by the solid of revolution and $\partial M$ or $\Sigma$ by its surface. Also, in the present paper we only consider the surface $\mathbb{X}(s,t)$ defined by \eqref{maincurve} satisfying these fundamental assumptions:\\

\begin{enumerate}
\item[(\textbf{A1})] The closed curve $\mathcal{C}_s(t)$ does not intersect itself when moving along the curve $\gamma(s)$.
\item[(\textbf{A2})] The domain enclosed by the parametrized surface $\Sigma$ defined by \eqref{maincurve} is not self-intersected.
\end{enumerate}

For the regularity of the functions $\gamma$, and $f,g,h$, we have the minimal assumptions for the order of derivatives necessary to carry out the proofs. It turns out that the requirements are different for the two curves $\gamma$ and $\mathcal{C}_s$. For $\gamma(s)$ we require that the third derivatives $\gamma^{'''}$ be continuous in the interval $0\leq s\leq s_0$. We emphasis that $\gamma$ may not necessarily be closed, but if it \textit{is} closed we have to assume that $\gamma^{(k)}(0)=\gamma^{(k)}(s_0)$ for $k=0,1,2,3,$ where $\gamma^{(k)}$ is the $k$-th derivative with respect to the horizontal arc-length $s$. For the closed curve $\mathcal{C}_s$ (when fixed $s$), in the first glance, it seems that we need to have the functions $f(t), g(t), h(t)$ be $C^1$-functions in the interval $0\leq t\leq t_0$, except for Corollary \ref{minimalsurface} that we need a $C^2$-assumption for those functions. However, a stronger regularity assumption is needed to avoid the issue of singular points. A point on a surface in $\mathbb{H}_1$ is called \textit{singular} if the contact plane and tangent plane at that point coincide with each other; otherwise, it is called \textit{regular} (see next section for more detail). According to \cite[Theorem 3.3 or Theorem B]{CHMY}, a singular point on a $C^2$-smooth surface is either isolated or passed through by a $C^1$-smooth singular curve in a neighborhood if the p-mean curvature is bounded. Since the p-mean curvature needs $C^2$-regularity for the surface, throughout the whole paper we always assume that the functions $f(t), g(t), h(t)$ are of $C^2$-regularity. Thus, the set of singular points on the constructed surfaces has measure zero with respect to the area and volume measures and so we can ignore the influence of the singular point when considering the surface areas and volumes.

Next we introduce the appropriate geometric quantities for Pappus-Guldin Theorem. Let $M$ be a domain with boundary $\partial M$ in $\mathbb{H}_1$. For any point $p\in \partial M$ is not a singular point, denote by the unit vector $e_1(p)=TM_p\cap \xi_p$, $e_2=Je_1$, and the standard contact form $\Theta=dz+xdy-ydx$, where $J$ is the CR structure, $J^2=-1$. The set $\{e_1, e_2, T\}$ forms an standard orthonormal frame with the corresponding coframe $\{e^1, e^2, \Theta\}$. As we know that the appropriate notions for volumes and areas in $\mathbb{H}_1$ are the Euclidean volumes and the p-areas, respectively, and the p-area can be obtained by the variation of the volume form along the $e_2$-direction (equation (2.5) in \cite[p135]{CHMY}). More precisely, those geometric quantities are defined by
\begin{align}\label{parea0}
Volume(M)&:=V(M)=\frac{1}{2}\int_M \Theta\wedge d\Theta, \\
p\text{-}area(\partial M)&:=\mathcal{A}(M)=\int_{\partial M} \Theta \wedge e^1, \nonumber
\end{align}
where $\Theta\wedge e^1$ is called the \textit{p-area form}. Note that the volumes and the p-areas are invariant under the "rigid motions" in $\mathbb{H}_1$, namely, they are invariant under the group of pseudo-Hermitian transformations in $\mathbb{H}_1$, $PHS(1)$, which consists of the left translations and the elements in the unitary group $U(1)$. For the details of this group $PSH(n)$, we refer the reader to \cite{CHL} for $n=1$ and \cite{chiu2015fundamental} for $n\geq 1$; both papers are the first published papers studying on the fundamental theorems for curves and hypersurfaces in the Heisenberg groups.

A key observation for the study of Pappus-Guldin theorem is shown in Proposition \ref{pareaform1}: for any parametric surface $\Sigma:\mathbb{X}(s,t)=\big(X(s,t), Y(s,t), Z(s,t)\big)$ in $\mathbb{H}_1$ we derive the p-area form $d\Sigma_p$ at the nonsingular point $p\in\Sigma$, namely,
\begin{align}\label{pareaform5}
d\Sigma_p = \sqrt{A^2+B^2}ds dt,
\end{align}
where
\begin{align*}
A&:=Z_s X_t- X_s Z_t +X(X_tY_s-X_sY_t),  \\
B&:=Z_s Y_t- Y_s Z_t+ Y(X_tY_s -X_sY_t),
\end{align*}and the subscripts mean the partial derivatives with respect to the specific variables.
By using the divergence Theorem (Proposition \ref{Divergence}), the volume enclosed by the surface $\Sigma$ can be represented by the integral over $\Sigma$. Moreover, when $\Sigma$ is generated by the curve $\gamma(s)$ and the functions $f(t), g(t), h(t)$ as defined by \eqref{maincurve}, we obtain the first result showing that the volume of an enclosed tube-shaped domain can be obtained by the information from its surface.
\begin{theorem}\label{mainvolume}
Let $M$ be a domain in the 3-dimensional Heisenberg group $\mathbb{H}_1$ with boundary $\partial M$, described by the parametrized surface $\mathbb{X}(s,t)=\gamma(s) +f(t) U(s)+g(t) V(s) +h(s) T$ for any $(s,t)\in [0,s_0]\times [0,t_0]$, where $\gamma(s)$ is a horizontally regular curve (may not be necessarily closed), $U(s)=\gamma'_\xi$, and $V(s)=J\gamma'_\xi$. If $M$ satisfies the conditions $(A1), (A2)$, then the volume of $M$ is given by
\begin{align}
Volume(M)&=\iint \kappa (xx'+yy')h'f +\iint (yx'-y'x)\kappa h'g
%-\iint (yx'-y'x)f'g
+\iint \kappa h'f^2  -\iint \tau g'f  \\
&+\iint \kappa g^2 h' +\iint(2\tau-z') f'g+s_0\cdot  \int_0^{t_0}\Big( gg'f - h'g - f'g^2 \Big)dt ,\nonumber
\end{align}
where all double integrals are taken over the domain $[0,s_0]\times [0,t_0]$.
\end{theorem}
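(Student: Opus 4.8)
The plan is to convert the three-dimensional integral $Volume(M)=\tfrac12\int_M\Theta\wedge d\Theta=\int_M dx\,dy\,dz$ into a surface integral over the boundary $\Sigma=\partial M$ and then to evaluate that surface integral in the parameters $(s,t)$. First I would apply Proposition \ref{Divergence}: the Euclidean volume form $\tfrac12\Theta\wedge d\Theta=dx\wedge dy\wedge dz$ has an explicit $2$-form primitive built from $\Theta$ and a primitive of $d\Theta$ -- for instance $\tfrac12(x\,dy-y\,dx)\wedge\Theta=\tfrac12\big(x\,dy\wedge dz+y\,dz\wedge dx\big)$ -- so the divergence theorem rewrites $Volume(M)$ as the integral over $\Sigma$ of this primitive, and pulling back under $\mathbb X(s,t)$ turns it into an explicit double integral $\iint_{[0,s_0]\times[0,t_0]}\Phi(s,t)\,ds\,dt$ with $\Phi$ a polynomial in $X,Y,Z$ and their first partial derivatives. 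Assumptions (A1) and (A2) are exactly what guarantee that $M$ is a genuine embedded domain, so this step is legitimate; note that a primitive involving $dz$ is precisely what makes the derivative $h'$ appear in $\Phi$.

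Second, I would write $\mathbb X(s,t)=(X,Y,Z)$ explicitly. Using the group law \eqref{groupop} and the standard basis \eqref{standardbasis} to express the moving frame $U(s)=\gamma'_\xi(s)$, $V(s)=J\gamma'_\xi(s)$ as ambient vectors along $\gamma(s)=(x(s),y(s),z(s))$, one gets $X=x+fx'-gy'$, $Y=y+fy'+gx'$, and $Z=z+f(yx'-y'x)-g(xx'+yy')+h$, where the horizontal arc-length condition means $(x')^2+(y')^2\equiv1$. Differentiating and invoking the structure equations of $\gamma$ -- the identity $x'x''+y'y''=0$, the relations $x''=-\kappa y'$, $y''=\kappa x'$ encoding the $p$-curvature $\kappa$, and the role of the torsion $\tau$ of \cite{CHL} in the vertical component $z'+xy'-yx'$ of $\gamma'$ -- I would substitute into $\Phi$. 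Useful intermediate identities are $X_s=x'-\kappa(Y-y)$, $Y_s=y'+\kappa(X-x)$, $Z_t=f'(yx'-y'x)-g'(xx'+yy')+h'$, together with $\tfrac{d}{ds}(yx'-y'x)=-\kappa(xx'+yy')$ and $\tfrac{d}{ds}(xx'+yy')=1+\kappa(yx'-y'x)$.

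Third, after expanding $\Phi$ into a finite sum of terms, each of the form (a function of $s$) times (a function of $t$), I would integrate by parts in $t$. Since $\mathcal C_s$ is closed, $f,g,h$ are periodic on $[0,t_0]$, so every term whose $t$-factor is an exact derivative (such as $f'$, $g'$, $ff'$, $gg'$, $f^2f'$, $g^2g'$) integrates to zero, while in the remaining terms the derivative is transferred onto $h$, turning for example $hg'$ into $-h'g$ and $\kappa hff'$ into $\tfrac12\kappa h'f^2$. Finally I would collect the survivors: the terms with constant $s$-factor assemble into $s_0\cdot\int_0^{t_0}(gg'f-h'g-f'g^2)\,dt$, while the rest, re-expressed through $\kappa,\tau,z'$ and $x,y,x',y'$, give the six double integrals in the statement. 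If $\gamma$ is not closed the computation is identical, since no integration by parts in $s$ is used and hence no endpoint terms from $\gamma$ occur.

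The essential difficulty is entirely computational: $\Phi$ expands into more than a dozen terms, and one must track every sign through the integration by parts in $t$ and correctly re-express the extrinsic quantities $yx'-y'x$, $xx'+yy'$ and $z'$ in terms of the intrinsic curvature-torsion data $\kappa,\tau$ of \cite{CHL}. A secondary point to watch is the orientation of the pulled-back primitive, so that the outward-oriented boundary gives $+Volume(M)$ rather than its negative.
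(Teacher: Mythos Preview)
Your plan is essentially the paper's own argument. The primitive $\tfrac12(x\,dy-y\,dx)\wedge\Theta$ pulls back under $\mathbb X$ to precisely $\tfrac12(-XB+YA)\,ds\,dt$ with $A,B$ as in \eqref{AB}, which is exactly the content of Corollary~\ref{divcoro}; the paper then expands this integrand, separates variables as in \eqref{HJ}, and uses the closedness of $\mathcal C_s$ to drop the terms whose $t$-factor is an exact derivative.

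The one organizational difference worth noting is that the paper never writes the third component $Z$ explicitly. Instead it computes $\mathbb X_s$ and $\mathbb X_t$ directly in the moving frame $\{U,V,T\}$ via the Frenet formula \eqref{Frenet}, takes the Heisenberg cross product \eqref{Hcrossproduct}, and reads off $A=\alpha x'+\beta y'$, $B=-\beta x'+\alpha y'$ with $\alpha=-h'(1-\kappa g)+f'(\tau-g)$ and $\beta=\kappa h'f-g'(\tau-g)$. This makes the expansion of $-XB+YA=\beta(xx'+yy')+\alpha(yx'-xy')+\beta f+\alpha g$ very short. Your coordinate route reaches the same integrand but with more bookkeeping, since you carry $Z_s,Z_t$ around explicitly. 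One small inaccuracy in your outline: no undifferentiated $h$ ever appears in $\Phi$ (only $X,Y,Z_s,Z_t$ enter, and $h$ sits in $Z$ alone), so there is no step ``$hg'\to -h'g$'' --- the factor $h'$ is present from the outset through $Z_t$, and the periodicity in $t$ is used only to kill the terms whose $t$-part integrates to zero, not to transfer derivatives onto $h$.
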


\begin{remark}
Recently Chiu-Ho (\cite{chiu2019global}, Theorem 2.1) defined the Gauss map $G:\gamma\rightarrow \mathbb{S}^1\subset \xi_0(0)$ by $G(p)=L_{p^{-1}*}e_2$ for horizontally regular closed cure $\gamma$ in $\mathbb{H}_1$. They also showed that the degree of the Gauss map satisfying $deg G=\frac{1}{2\pi}\int_0^{s_0} \kappa(s)ds$. Therefore, the third and eighth terms in Theorem \ref{mainvolume} can be written in terms of the topological information of $M$ by $\iint \kappa h' f^2= 2\pi deg  G \int_0^{t_0} h'f dt$ and $\iint \kappa g^2 h' = 2\pi deg  G \int_0^{t_0}g^2h'dt$, respectively.
\end{remark}

By the definition \eqref{pcur} of the p-curvatures, the assumption $\kappa=0$ for the curve $\gamma$ is equivalent to that the projection of $\gamma$ onto the $xy$-plane is a line. The following result shows that when $\kappa(s)\equiv 0$ and $f(t)\equiv 0$, the volume of the enclosed tube-shaped domain satisfies Pappus-Guldin theorem.

\begin{corollary}[Pappus-Guldin Theorem for volumes]\label{mainvolume2}
Let $\gamma(s)$ be a horizontally regular curve parametrized by horizontal arc-length with p-curvature $\kappa(s)\equiv 0$. Suppose the parametrized surface $\mathbb{X}(s,t)$, given by moving the plane region $\mathcal{R}$ enclosed by the plane curve $\mathcal{C}_0(t)=\gamma(0)+g(t)U(0)+h(t)T$ along $\gamma$ is defined as
$$\mathbb{X}(s,t)=\gamma(s)+g(t)U(s) + h(t)T$$
for any $(s,t)\in [0, s_0]\times [0,t_0]$ satisfying the assumptions $(A1)$, $(A2)$. Then the volume of enclosed domain $M$ by $\mathbb{X}(s,t)$ satisfies Pappus-Guldin theorem, namely,
\begin{align*}
V(M)=\frac{1}{2}\cdot \text{\textit{horizontal length}}(\gamma)\cdot \mathcal{A}(\mathcal{R}),
\end{align*}where $\mathcal{A}(R)$ is the p-area of the area of $\mathcal{R}$.
\end{corollary}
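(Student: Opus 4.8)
The plan is to deduce the corollary by specializing the volume formula of Theorem~\ref{mainvolume}. The surface $\mathbb{X}(s,t)=\gamma(s)+g(t)U(s)+h(t)T$ of the corollary is the instance of \eqref{maincurve} in which the component along the first frame field is absent, i.e.\ in the notation of Theorem~\ref{mainvolume} one sets $f(t)\equiv 0$ (so also $f'(t)\equiv 0$), while the hypothesis supplies $\kappa(s)\equiv 0$. Inserting $f\equiv 0$ and $\kappa\equiv 0$ into that formula, every term on the right collapses except one: the four double integrals $\iint\kappa(xx'+yy')h'f$, $\iint(yx'-y'x)\kappa h'g$, $\iint\kappa h'f^2$, $\iint\kappa g^2 h'$ all vanish because each carries the factor $\kappa$; the two double integrals $-\iint\tau g'f$ and $\iint(2\tau-z')f'g$ vanish because each carries $f$ or $f'$; and in the remaining single integral $gg'f-f'g^2=0$ as well. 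What is left is $V(M)=-s_0\int_0^{t_0}h'(t)g(t)\,dt$, where $s_0$ is the horizontal length of $\gamma$ since $\gamma$ is parametrized by horizontal arc-length.

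It remains to rewrite this integral as (a constant times) the p-area of $\mathcal{R}$. Because $\mathcal{C}_0$ is a closed curve, $g(0)h(0)=g(t_0)h(t_0)$, so $\int_0^{t_0}(gh)'\,dt=0$, and hence
\[
-\int_0^{t_0}h'g\,dt=\tfrac12\int_0^{t_0}\bigl(g'h-gh'\bigr)\,dt,
\]
which by Green's theorem is, up to the orientation chosen on $\mathcal{C}_0$, the Euclidean area of the planar region $D\subset\mathbb{R}^2$ enclosed by $t\mapsto(g(t),h(t))$. On the other hand I would compute $\mathcal{A}(\mathcal{R})$ directly from Proposition~\ref{pareaform1}: parametrizing $\mathcal{R}$, the region bounded by $\mathcal{C}_0(t)=\gamma(0)+g(t)U(0)+h(t)T$, by $(s,t)=(a,b)\mapsto\gamma(0)+aU(0)+bT$ for $(a,b)\in D$, the $b$-direction is the Reeb field $T$, which has no horizontal component, so $X_t=Y_t=0$ and the shear factor $X_tY_s-X_sY_t$ in \eqref{pareaform5} is identically zero; hence $A=-\eta$ and $B=-\zeta$ where $U(0)=(\eta,\zeta,0)$, and $\sqrt{A^2+B^2}\equiv|U(0)|=1$, giving $\mathcal{A}(\mathcal{R})=\operatorname{Area}(D)$. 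Substituting this back yields the asserted identity $V(M)=\tfrac12\cdot(\text{horizontal length of }\gamma)\cdot\mathcal{A}(\mathcal{R})$.

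Every step is a substitution into an already-established result, so the only genuine care is needed in the sign-and-constant bookkeeping of the last step: one has to fix compatible orientations for $\mathcal{C}_0$ and for $\mathcal{R}$ (equivalently, a consistent sign for $e^1$ along $\mathcal{R}$) so that the signed area produced by Green's theorem matches $+\mathcal{A}(\mathcal{R})$ and so that the overall numerical constant comes out as in the statement. I expect no further obstacle beyond this accounting.
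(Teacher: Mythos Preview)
Your argument is correct and follows essentially the same route as the paper's: specialize Theorem~\ref{mainvolume} to $\kappa\equiv 0$ and $f\equiv 0$ to obtain $V(M)=-\tfrac{s_0}{2}\int_0^{t_0}h'g\,dt$ (the paper's \eqref{uglyform2}), then convert this line integral into the area of $\mathcal{R}$ via Green's theorem. The only cosmetic difference is that the paper identifies the p-area of the vertical planar region $\mathcal{R}$ with its Euclidean area by invoking Example~\ref{ex1} together with left-translation invariance, whereas you recompute this directly from Proposition~\ref{pareaform1}; the content is identical.
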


Our second result shows that the p-area form of the parametrized surface $\Sigma$ defined by \eqref{maincurve} can be achieved in terms of $\gamma, f, g, h$ and their derivatives by substituting the surface into \eqref{pareaform5}.

\begin{theorem}\label{mainarea}
Given a horizontally regular curve $\gamma(s)=(x(s), y(s), z(s))$ parametrized by the horizontal arc-length $s$ with the p-curvature $\kappa$ and contact normality $\tau$ defined by \eqref{pcur}, \eqref{tau} respectively in the Heisenberg group $\mathbb{H}_1$. For any $C^1$-functions $f(t), g(t), h(t)$, suppose the parametrized surface $\Sigma$ defined by \eqref{maincurve}
\begin{align*}
\mathbb{X}: (s,t)\in [0,s_0]\times [0, t_0]\mapsto \gamma(s) + f(t)U(s) + g(t)V(s) +h(t)T,
\end{align*} satisfying the assumption $(A1), (A2)$ and $p$ is a nonsingular point on $\Sigma$. Then the p-area form $d\Sigma_p$ of the surface $\Sigma$ at $p$ is given by
\begin{align}\label{pareaform2}
d\Sigma_p&= \\
\Big[(h')&^2\big((\kappa f)^2 +(1-\kappa g)^2 \big)-2h'(\tau-g)\big(\kappa(fg'-f'g)+f'\big)+(\tau-g)^2\big( (f')^2+(g')^2\big)\Big]^{1/2} dsdt. \nonumber
\end{align}
\end{theorem}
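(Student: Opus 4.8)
The plan is to substitute the parametrization \eqref{maincurve} into the p-area formula \eqref{pareaform5} of Proposition \ref{pareaform1} and then to collapse the resulting expression using the structure equations \eqref{pcur} and \eqref{tau} of $\gamma$ together with the orthonormality of $\{U(s),V(s),T\}$.

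First I would record the three coordinate functions $X(s,t),Y(s,t),Z(s,t)$ of $\mathbb{X}(s,t)=\gamma(s)+f(t)U(s)+g(t)V(s)+h(t)T$. Writing $U(s)=\eta\mathring{e}_1(\gamma(s))+\zeta\mathring{e}_2(\gamma(s))$ and $V(s)=-\zeta\mathring{e}_1(\gamma(s))+\eta\mathring{e}_2(\gamma(s))$ with $\eta=x'$, $\zeta=y'$, $\eta^2+\zeta^2=1$, the group operation produces explicit polynomial expressions for $X,Y,Z$ in $x,y,z,\eta,\zeta,f,g,h$, hence for the six partials $X_s,X_t,Y_s,Y_t,Z_s,Z_t$. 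The role of \eqref{pcur} here is precisely that it evaluates $\eta',\zeta'$ in terms of $\kappa$ (namely $\eta'=-\kappa\zeta$, $\zeta'=\kappa\eta$, up to the sign conventions used in \eqref{pcur}), while \eqref{tau} evaluates $z'$ in terms of the contact normality $\tau$; inserting these removes all second-order data of $\gamma$ from $X_s,Y_s,Z_s$. One also notes in passing that the minor $X_tY_s-X_sY_t=-g'+\kappa(ff'+gg')$ is independent of $x,y,z$.

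The key device I would use is the $PHS(1)$-invariance of the p-area form $\Theta\wedge e^1$ recalled after \eqref{parea0}, together with the left-invariance of $\kappa$ and $\tau$: since \eqref{pareaform2} is a pointwise assertion, to prove it at a fixed nonsingular point $p=\mathbb{X}(s_*,t_*)$ one may left-translate so that $\gamma(s_*)=0$. At that point every term carrying $x(s_*),y(s_*),z(s_*)$ vanishes, and $A,B$ reduce to short expressions in $\eta,\zeta,f,g,h,f',g',h',\kappa,\tau$. Feeding these into \eqref{pareaform5} and using $\eta^2+\zeta^2=1$ together with the bilinear identities $\eta(f\eta-g\zeta)+\zeta(f\zeta+g\eta)=f$ and $-\zeta(f\eta-g\zeta)+\eta(f\zeta+g\eta)=g$, I expect $A$ and $B$ to decouple in the form $A=\alpha\eta+\beta\zeta$, $B=\alpha\zeta-\beta\eta$ with $\alpha,\beta$ depending only on $f,g,h,\kappa,\tau$ and their $t$-derivatives, so that $A^2+B^2=\alpha^2+\beta^2$. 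A final expansion of $\alpha^2+\beta^2$, regrouped via the elementary identities $(1-\kappa g)f+\kappa fg=f$ and $f'(1-\kappa g)+\kappa fg'=f'+\kappa(fg'-f'g)$, should then match the bracket in \eqref{pareaform2}; taking the square root and restoring $ds\,dt$ finishes the proof.

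The main obstacle is purely the bookkeeping: the raw substitution is a lengthy polynomial in $\eta,\zeta,f,g,h$ and their derivatives, and the delicate point is verifying that the position variables and the higher derivatives of $\gamma$ cancel exactly as anticipated — which is what the normalization $\gamma(s_*)=0$ is designed to make transparent. For regularity, since $f,g,h$ are $C^1$ the identity \eqref{pareaform2} holds at every nonsingular point, and by \cite[Theorem 3.3]{CHMY} the singular set of $\Sigma$ has measure zero, so \eqref{pareaform2} is valid $ds\,dt$-almost everywhere, which is all that is used when integrating to obtain the p-area.
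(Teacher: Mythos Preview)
Your proposal is correct and arrives at the same decomposition $A=\alpha x'+\beta y'$, $B=-\beta x'+\alpha y'$ (hence $A^2+B^2=\alpha^2+\beta^2$) that the paper uses, but the computational route differs. The paper never writes out $X,Y,Z$ and never invokes the $PHS(1)$-invariance/normalization trick: instead it computes $\mathbb{X}_s$ and $\mathbb{X}_t$ directly in the moving frame $\{U,V,T\}$ via the Frenet formula \eqref{Frenet}, obtaining $\mathbb{X}_s=(1-\kappa g)U+\kappa f V+(\tau-g)T$ and $\mathbb{X}_t=f'U+g'V+h'T$, and then applies Lemma~\ref{ABinner} (the Heisenberg cross product $\bar\times$) to read off $A$ and $B$ in one line as in \eqref{volumeAB}. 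This immediately exhibits the $\alpha x'+\beta y'$ structure with $\alpha=-h'(1-\kappa g)+f'(\tau-g)$ and $\beta=\kappa h'f-g'(\tau-g)$, and the position variables $x,y,z$ never enter, so no pointwise normalization is needed. Your coordinate approach with the left-translation $\gamma(s_*)=0$ works as well and avoids relying on Lemma~\ref{ABinner}; the trade-off is that the cancellation of the position variables, which is automatic in the frame computation, has to be forced by the normalization and established point by point rather than globally.
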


Besides, we also have Pappus-Guldin theorem for surface areas if we make suitable assumptions on the functions $\gamma, f,g$, and $h$.

\begin{corollary}[Pappus-Guldin Theorem for surface areas]\label{pappsarea}
Given the surface defined by
$$\Sigma: \mathbb{X}(s,t)=\gamma(s)+f(t)U(s)+g(t)V(s)+h(t)T$$ in $\mathbb{H}_1$ satisfying the assumptions $(A1), (A2)$ such that the curve $\gamma(s)$ is parametrized by horizontal arc-length $s\in [0, s_0]$ and $f,g,h$ are functions defined on $[0,t_0]$.
\begin{enumerate}
\item If $h(t)\equiv C$ for some constant $C$ and $(f')^2+ (g')^2=1$, the p-area of $\Sigma$ is given by
\begin{align}\label{pappasarea1}
\mathcal{A}(\Sigma)= \int_0^{s_0}\int_0^{t_0} |\tau(s)-g(t)| dt ds.
\end{align}
In addition, if $\tau\equiv 1>g $ on $[0,s_0]\times [0,t_0]$ (resp. $\tau\equiv -1 <g$), then Pappus-Guldin theorem holds, namely,
\begin{align}\label{pappasarea2}
\mathcal{A}(\Sigma)&= s_0t_0-s_0 \int_0^{t_0}g(t) dt.   \\
 (\text{resp. } &=s_0t_0+s_0 \int_0^{t_0} g(t) dt) \nonumber
\end{align}
\item If $\kappa \equiv 0, \tau\equiv 1, g\equiv 0$, and $h(0)=h(t_0)$, then Pappus-Guldin theorem also holds, namely,
\begin{align}\label{ktgh}
\mathcal{A}(\Sigma)=s_0 \int_0^{t_0}|f'(t)|dt.
\end{align}
\end{enumerate}
\end{corollary}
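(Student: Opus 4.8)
The plan is to reduce everything to the p-area form of Theorem~\ref{mainarea}: in each case one substitutes the listed hypotheses into \eqref{pareaform2}, simplifies the bracket, takes the square root, and integrates over $[0,s_0]\times[0,t_0]$. The two standing assumptions (A1), (A2) are needed only to guarantee that $\mathbb{X}$ is an embedding, so that $\iint d\Sigma_p$ really equals $\mathcal{A}(\Sigma)$; and the $C^2$-regularity of $f,g,h$ together with the structure result of \cite{CHMY} recalled in the introduction guarantees that the singular locus of $\Sigma$ is negligible, so that working with \eqref{pareaform2} (valid at nonsingular points) computes the whole p-area.

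For part (1): the hypothesis $h\equiv C$ gives $h'\equiv 0$, which kills the first two summands of the bracket in \eqref{pareaform2}; the lone survivor is $(\tau-g)^2\big((f')^2+(g')^2\big)$, and the unit-speed condition $(f')^2+(g')^2=1$ reduces it to $(\tau-g)^2$. Hence $d\Sigma_p=|\tau(s)-g(t)|\,ds\,dt$, and integration gives \eqref{pappasarea1}. For the refinement, $\tau\equiv1$ with $\tau>g$ on the whole rectangle lets one drop the modulus, $|\tau-g|=1-g$, and the double integral evaluates to $s_0t_0-s_0\int_0^{t_0}g(t)\,dt$, which is \eqref{pappasarea2}; the case $\tau\equiv-1<g$ is the mirror image, with $|\tau-g|=g+1$.

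For part (2): setting $\kappa\equiv0$, $\tau\equiv1$, $g\equiv0$ (hence $g'\equiv0$) in \eqref{pareaform2} gives $(\kappa f)^2+(1-\kappa g)^2=1$, $\tau-g=1$, $\kappa(fg'-f'g)+f'=f'$, and $(f')^2+(g')^2=(f')^2$, so the bracket collapses to the perfect square $(h')^2-2h'f'+(f')^2=(h'(t)-f'(t))^2$, i.e.\ $d\Sigma_p=|h'(t)-f'(t)|\,ds\,dt$. One then has to see that the remaining hypothesis $h(0)=h(t_0)$ — which, for a profile admissible under (A1), (A2) with $g\equiv0$ and $\kappa\equiv0$, pins down the vertical component — removes the $h'$ contribution and leaves $|f'(t)|$; integrating over $s$ produces \eqref{ktgh}.

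The \emph{main obstacle} I expect is exactly that last reduction in part~(2): unlike part~(1), where $h'$ is annihilated outright by the hypothesis $h\equiv C$, here $h(0)=h(t_0)$ is only a closure condition, and one must argue — using the non-self-intersection assumptions (A1), (A2) for a surface with $g\equiv0$, $\kappa\equiv0$ — why the vertical profile contributes nothing to the p-area. Everything else is a short, essentially mechanical substitution into \eqref{pareaform2}, the only care being the correct sign when deleting the absolute values in the Pappus-type conclusions of both parts.
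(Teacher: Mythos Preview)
Your treatment of part~(1) is exactly the paper's: the paper's proof simply says to substitute $h'\equiv 0$ and $(f')^2+(g')^2=1$ into Theorem~\ref{mainarea} to obtain \eqref{pappasarea1}, and notes that \eqref{pappasarea2} follows at once from the sign assumption on $\tau-g$. Nothing more is needed here.

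For part~(2) you have actually gone further than the paper, whose written proof does not address \eqref{ktgh} at all. Your reduction of \eqref{pareaform2} to $d\Sigma_p=|h'(t)-f'(t)|\,ds\,dt$ under $\kappa\equiv 0$, $\tau\equiv 1$, $g\equiv 0$ is correct, and you are right to flag the passage from $\int_0^{t_0}|h'-f'|\,dt$ to $\int_0^{t_0}|f'|\,dt$ as the real issue. However, your proposed resolution via (A1)--(A2) is not sufficient as stated. With $\kappa\equiv 0$ and $\tau\equiv 1$ one has $\gamma'(s)=U+T$ with $U$ constant, so $\mathbb{X}(s,t)=\gamma(0)+(s+f(t))U+(s+h(t))T$; injectivity of $\mathbb{X}$ is governed by monotonicity of $f-h$, which yields a fixed sign for $f'-h'$ but says nothing about the sign of $f'$ itself. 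Taking for instance $f'(t)=1+2\sin t$ and $h'(t)=2\sin t$ on $[0,2\pi]$ gives $f'-h'\equiv 1$ (so $\mathbb{X}$ is injective) and $h(0)=h(2\pi)$, yet $\int_0^{2\pi}|f'-h'|\,dt=2\pi$ while $\int_0^{2\pi}|f'|\,dt=\tfrac{2\pi}{3}+4\sqrt{3}$, so \eqref{ktgh} fails.

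What makes the argument go through in the paper's own Example~\ref{corollaryexample2} is that \emph{both} $f'$ and $f'-h'$ keep a fixed sign; under that extra hypothesis the step is immediate:
\[
\int_0^{t_0}|h'-f'|\,dt=\Big|\int_0^{t_0}(f'-h')\,dt\Big|=\big|f(t_0)-f(0)\big|=\int_0^{t_0}|f'|\,dt,
\]
using $h(t_0)=h(0)$. So your diagnosis of the obstacle is accurate, but the cure is a sign condition rather than (A1)--(A2); you should either add that hypothesis explicitly or note that the corollary's stated assumptions appear to be incomplete for part~(2).
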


Note that in Corollary \ref{pappsarea} (1) the assumption $h(t)\equiv 0$ means that the curve $\mathcal{C}_s$ is on the contact plane $\xi_{\gamma(s)}$ for all $s\in [0,s_0]$; also in the second part of the corollary, when $g\equiv 0$, the horizontal length of the curve $\mathcal{C}_s$ is $\int_0^{t_0}|f'(t)|dt$ (see Lemma \ref{horizontallengthdetermined}), and so the formula \eqref{ktgh} is again the form satisfying Pappus-Guldin theorem. In Section \ref{mainareapappsarea} we show different types of examples (Example \ref{examplereqornotone}, Example \ref{corollaryexample2}) satisfying conditions in Corollary \ref{pappsarea}.

The paper is organized as follows. First, some basic results of horizontally regular curves and surfaces will be reviewed in Section \ref{Preliminary}, including the geometric invariants: the p-curvature $\kappa$ and the contact normality $\tau$, and the Frenet frame formula of curves in $\mathbb{H}_1$. In Section
\ref{pareavolumeexammples} we derive the p-area forms for the parametric surfaces. The volumes of domains enclosed by tube-shaped surfaces are also obtained by the divergence theorem, which is proved in terms of the pseudo-hermitian connection on Cauchy-Riemannian manifolds. An isoperimetric-type inequality is also provided. We also provide some examples (Example \ref{ex1}, Example \ref{ex2}) showing that when the p-areas coincide with the usual Euclidean-areas. In Section \ref{mainvolumemainvolum2} we prove Pappus-Guldin theorem for volumes (Theorem \ref{mainvolume}, Corollary \ref{mainvolume2}). Finally, in Section \ref{mainareapappsarea} first we show some special cases for cylindrical surfaces (Proposition \ref{observ1}, Proposition \ref{observ2}) as the key observation to the main theorem. Secondly, the proofs of Pappus-Guldin theorem for surfaces (Theorem \ref{mainarea} and Corollary \ref{pappsarea}) are provided. We also give some examples to support our theorems, including the surfaces with constant p-mean curvatures.

\textbf{Acknowledgement} The author would like to thank the anonymous reviewer for the comments for the regularity of the constructed surfaces. This work was funded in part by National Center for Theoretical Sciences (NCTS) in Taiwan and in part by Ministry of Science and Technology, Taiwan, with grant Number: 108-2115-M-024-007-MY2.

\section{Preliminary}\label{Preliminary}
We recall some terminologies for our purpose. For more details about the Heisenberg groups, we refer the readers to
\cites{CHMY, CHL, goodman, Pursell, GM}. The author also noted that there are some applications of image processing using the Heisenberg groups as the configuration spaces and dealing with the problems for image completion, for examples, see \cite{2008implementation, citti2016sub} and the references therein. The $3$-dimensional Heisenberg group $\mathbb{H}_1$ is the Lie group $(\mathbb{R}^3, \cdot)$ with the group operation $\cdot$ defined by
\begin{align}\label{groupop}
(x,y,z)\cdot (x',y',z')=(x+x',y+y',z+z'+yx'-xy'),
\end{align}
for any point $(x,y,z)$, $(x',y',z')\in\mathbb{R}^3$.
For any point $p\in\mathbb{H}_1$, the \textit{left translation} by $p$ is the diffeomorphism $L_p(q):=p\cdot q$. The standard basis of left invariant vector fields (i.e., invariant under any left translation) is given by
\begin{align}\label{standardbasis}
\mathring{e}_1(p):=\frac{\partial}{\partial x}+y\frac{\partial}{\partial z}, \  \mathring{e}_2(p):=\frac{\partial}{\partial y}-x\frac{\partial}{\partial z}, \ T(p):=\frac{\partial}{\partial z}.
\end{align}
The standard contact form is defined by $\Theta=dz+xdy-ydx$ and the contact plane $\xi_p$ at any point $p\in \mathbb{H}_1$ (or called horizontal distribution) is the smooth plane distribution generated by $\mathring{e}_1(p)$ and $\mathring{e}_2(p)$, equivalently, $\xi_p=ker\Theta$. We shall consider the (left invariant) Levi-metric $\langle\cdot, \cdot\rangle:=\frac{1}{2}d\Theta(\cdot, J\cdot)$ in $\mathbb{H}_1$ so that $\{\mathring{e}_1,\mathring{e}_2,T\}$ is an orthonormal basis in the Lie algebra of $\mathbb{H}_1$. The standard CR structure is an endomorphism $J:\mathbb{H}_1\rightarrow \mathbb{H}_1$ such that $J(\mathring{e}_1)=\mathring{e}_2$, $J(\mathring{e}_2)=-\mathring{e}_1$, $J(T)=0$, and $J^2=-1$.

Recall that the velocity vector $\gamma'$ of a curve $\gamma$ defined on some interval $I$ in $\mathbb{H}_1$ has the natural decomposition
\begin{align*}
\gamma'=\gamma'_\xi+\gamma'_T,
\end{align*}
where $\gamma'_\xi$ (resp. $\gamma'_T$) is the orthogonal projection of $\gamma'$ on $\xi$ along $T$ (resp. on $T$ along $\xi$) with respect to the Levi-metric. A \textit{horizontally regular curve} is a parametrized curve $\gamma(u)$ such that $\gamma'_\xi(u)\neq 0$ for all $u\in I $ (Definition 1.1, \cite{CHL}). In Proposition 4.1 \cite{CHL}, we showed that any horizontally regular curve can be uniquely reparametrized by horizontal arc-length $s$, up to a constant, such that $|\gamma'_\xi(s)|=1$ for all $s$, and called the curve being with horizontal unit-speed. Also, a curve $\gamma: I \subset \mathbb{R}\rightarrow \mathbb{H}_1$ is called \textit{horizontal} (or \textit{Legendrian}) if its tangent at any point on the curve is on the contact plane, namely, if we write the curve in coordinates $\gamma:=(x,y,z)$ with the tangent vector $\gamma'=(x',y',z')=x'\mathring{e}_1(\gamma)+y'\mathring{e}_2(\gamma)+T(z'-x'y+xy')$, then the curve $\gamma$ is horizontal if and only if
\begin{align}\label{horizontal}
z'-x'y+xy'=0,
\end{align}where the prime $'$ denotes the derivative with respect to the parameter of the curve.

Moreover, two geometric quantities for horizontally regular curves parametrized by horizontal arc-length, the \textit{p-curvature} $\kappa(s)$ and the \textit{contact normality} $\tau(s)$ (also called \textit{T-variation} in \cite{chiu2019global}), are defined respectively by
\begin{align}
\kappa(s)&:=\langle\frac{d\gamma'(s)}{ds},J\gamma'(s)\rangle,  \label{pcur} \\
\tau(s)&:=\langle\gamma'(s),T\rangle. \label{tau}
\end{align}
Note that both $\kappa$ and $\tau$ are invariant under pseudo-hermitian transformations $PHS(1)$ \cite[Section 4]{CHL}. We point out that the p-curvature $\kappa(s)$ is analogous to the curvature of the curve in the Euclidean space $\mathbb{R}^3$, while $\tau(s)$ measures how far the curve is from being horizontal. By \eqref{horizontal}, a curve is horizontal if and only if $\tau\equiv 0$. When the curve $\gamma(u)$ is parametrized by arbitrary parameter $u$ (not necessarily the horizontal arc-length $s$), the p-curvature and the contact normality are given by
\begin{align}\label{pcurve}
\kappa(u):=\frac{x'y''-x''y'}{\left((x')^2+(y')^2\right)^{3/2}}(u),
\end{align}
and
\begin{align}\label{contactnor}
\tau(u):=\frac{xy'-x'y +z'}{((x')^2+(y')^2)^{1/2}}(u),
\end{align}
respectively, and it is also clear that the p-curvature is the usual curvature of the projection of the curve onto the $xy$-plane. We also recall that \cite[p 12]{CHL} the Frenet frame formula for any horizontally regular curve $\gamma(s)$ parametrized by horizontal arc-length is given by
\begin{align}\label{Frenet}
\left\{
\begin{array}{ccccc}
\frac{d\gamma}{ds} &  = &  \gamma'_\xi  &  & +\tau T, \\
\frac{d\gamma'_\xi}{ds} &  = &                &\kappa J\gamma'_\xi, &\\
\frac{dJ\gamma'_\xi}{ds} & = &-\kappa \gamma'_\xi & &- T, \\
\frac{dT}{ds} & =  &0. &&
\end{array}
\right.
\end{align}

Finally we review some concepts of hypersurfaces in $\mathbb{H}_1$. Consider the hypersurface $\Sigma$ in $\mathbb{H}_1$. A point $p\in \Sigma$ is called \textit{singular} if $\xi_p$ coincides with the tangent plane $T_p\Sigma$ at $p$. Otherwise, $p$ is called \textit{nonsingular} or \textit{regular}, and so $T\Sigma\cap \xi$ defines a $1$-dimensional foliation (also called the characteristic direction). Choose a unit vector $e_1\in T\Sigma\cap \xi$, there is a unique (up to a sign) unit vector $e_2\in \xi$ that is perpendicular to $e_1$ with respect to the Levi-metric $\langle, \rangle$. In \cite{cheng2007} the authors call $e_2$ the \textit{p-normal} or the \textit{Legendrian normal}. Suppose that $\Sigma$ bounds a domain $M$ in $\mathbb{H}_1$. We define the p-area $2$-form $d\Sigma$ by computing the first variation, away from the singular set, of the standard volume in the p-normal $e_2$:
$$\delta_{fe_2}\Big( \int_M \Theta \wedge d\Theta\Big) =c\int_\Sigma f d\Sigma,$$
where $f\in C^\infty(\Sigma)$ with compact support away from the singular points, and $c=2$ is the normalization constant. The sign of $e_2$ is determined by requiring that $d\Sigma$ is positive w.r.t. the induced orientation on $\Sigma$. In \cite[(2.7), p261]{cheng2007} the authors derived the formula for p-areas for any hypersurface of graph-type in the Heisenberg groups $\mathbb{H}_n$ of higher dimensions $(n\geq 1)$. In particular, when $n=1$ and if $\Sigma=(x,y,f(x,y))$ for $(x,y)$ in some domain $D\subset \mathbb{R}^2$, the p-area of $\Sigma$ is given by
\begin{align}\label{pareagraph}
\mathcal{A}(\Sigma)=\iint_{(x,y)\in D}\Big((f_x-y)^2+(f_y+x)^2\Big)^{1/2} dxdy,
\end{align}
which will be mentioned again in Example \ref{ex2} in Section \ref{pareavolumeexammples}. Note that the p-area form can be continuously extended over the whole surface $\Sigma$ by having $d\Sigma$ vanished at the set of singular points. The similar notions of volumes and areas are also studied by \cite{capogna, ritore, ritore2006} when considering the $C^1$-surface $\Sigma$ enclosing a bounded set $M$. We point out that the p-area of $\Sigma$ also coincides with the $(2n + 1)$-dimensional spherical Hausdorff measure of
$\Sigma$ (see \cite{balogh2003,franchi}).

\section{The representations of p-area forms, volumes, and examples}\label{pareavolumeexammples}
In this section we show several important representations for volumes and p-areas for domains with boundaries. At first, for any surface defined by \eqref{maincurve} we show that the horizontal length of the curve $\mathcal{C}_s(t):=\mathbb{X}(s, t)$ only depends on the functions $f(t)$ and $g(t)$.

\begin{lemma}\label{horizontallengthdetermined}
Let $\Sigma$ be a parametrized surface defined by $\mathbb{X}(s,t)=\gamma(s)+f(t)U(s)+g(t)V(s)+h(t)T$ for $(s,t)\in [0,s_0]\times [0,t_0]$, where $\gamma$ is a horizontally regular curve with horizontal arc-length. Then for any fixed $s\in [0, s_0]$ the horizontal length of the curve $\mathcal{C}_s(t)=\mathbb{X}(s,t)$ is given by
$$length(\mathcal{C}_s)=\int_0^{t_0}\sqrt{(f')^2+(g')^2}dt.$$
Thus, the curve $\mathcal{C}_s(t)$ is parametrized by the horizontal arc-length if and only if $(f')^2 + (g')^2=1$ for all $t\in [0, t_0]$.
\end{lemma}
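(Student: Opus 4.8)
The plan is to fix $s$, compute the $t$-derivative of the curve $\mathcal{C}_s$ in the coordinates of $\mathbb{R}^3$, extract its contact (horizontal) part with respect to the Levi-metric, and observe that the length of that part is controlled only by the first two coordinates of the velocity, where the coefficients coming from $U(s)$ and $V(s)$ combine to give $\sqrt{(f')^2+(g')^2}$. Recall that the horizontal length of a curve $c$ on an interval $I$ is $\int_I |c'_\xi|$, the integral of the Levi-norm of the contact projection of $c'$.

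Fix $s\in[0,s_0]$ and write $\gamma(s)=(x(s),y(s),z(s))$. Since $\gamma$ is parametrized by horizontal arc-length, $|\gamma'_\xi(s)|=1$; decomposing $\gamma'(s)=(x'(s),y'(s),z'(s))$ as in Section~\ref{Preliminary} shows that, in coordinates, $U(s)=\gamma'_\xi(s)=(\eta,\zeta,\eta y(s)-\zeta x(s))$ and $V(s)=J\gamma'_\xi(s)=(-\zeta,\eta,-\eta x(s)-\zeta y(s))$, where $\eta=x'(s)$ and $\zeta=y'(s)$ satisfy $\eta^2+\zeta^2=1$, and $T=(0,0,1)$. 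Hence, for fixed $s$, the map $t\mapsto\mathbb{X}(s,t)=\gamma(s)+f(t)U(s)+g(t)V(s)+h(t)T$ is ordinary vector addition in $\mathbb{R}^3$, so writing $\mathbb{X}(s,t)=(X,Y,Z)$ and differentiating in $t$ gives $X_t=f'\eta-g'\zeta$ and $Y_t=f'\zeta+g'\eta$ (the precise form of $Z_t$ is not needed). Using the decomposition of a velocity vector into its contact and vertical parts recorded just before \eqref{horizontal}, the contact part of $\frac{\partial}{\partial t}\mathbb{X}(s,t)$ equals $X_t\mathring{e}_1(\mathbb{X})+Y_t\mathring{e}_2(\mathbb{X})$, and since $\{\mathring{e}_1,\mathring{e}_2\}$ is orthonormal for the Levi-metric its squared Levi-norm is
\begin{align*}
X_t^2+Y_t^2=(f'\eta-g'\zeta)^2+(f'\zeta+g'\eta)^2=\big((f')^2+(g')^2\big)(\eta^2+\zeta^2)=(f')^2+(g')^2,
\end{align*}
the cross terms cancelling.

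Integrating the horizontal speed $\sqrt{(f')^2+(g')^2}$ over $t\in[0,t_0]$ gives the stated formula for $length(\mathcal{C}_s)$; the last assertion is then immediate, since by Proposition~4.1 of \cite{CHL} a curve is parametrized by horizontal arc-length exactly when its horizontal speed is identically $1$, i.e.\ when $(f')^2+(g')^2\equiv 1$ on $[0,t_0]$. I do not expect a genuine obstacle; the only point requiring care is to read $U(s),V(s),T$ as coordinate vectors in $\mathbb{R}^3$ so that for fixed $s$ the curve is literally vector addition, and to notice that only the $x$- and $y$-components of $\frac{\partial}{\partial t}\mathbb{X}$ feed into the horizontal length, which is precisely where the normalization $\eta^2+\zeta^2=1$ does the work. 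As an alternative, coordinate-free route one can use that left translations and the $U(1)$-rotations of $\mathbb{H}_1$ are isometries of the Levi-metric preserving $\xi$: translating by $\gamma(s)^{-1}$ and rotating turns $\mathcal{C}_s$ into the curve $t\mapsto(f(t),g(t),h(t))$, whose contact velocity is visibly $f'\mathring{e}_1+g'\mathring{e}_2$, of Levi-length $\sqrt{(f')^2+(g')^2}$.
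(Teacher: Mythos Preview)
Your proof is correct and follows essentially the same approach as the paper: both compute the first two coordinate components $X_t,Y_t$ of $\partial_t\mathbb{X}$ (your $\eta,\zeta$ are the paper's $x',y'$), observe that the contact part of the velocity has Levi-norm $\sqrt{X_t^2+Y_t^2}$, and simplify using $(x')^2+(y')^2=1$. You are slightly more careful than the paper in noting that the frame $\mathring{e}_1,\mathring{e}_2$ sits at $\mathbb{X}(s,t)$ rather than at $\gamma(s)$, and your alternative route via the $PSH(1)$-invariance of horizontal length is a nice extra, but the core argument is the same.
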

\begin{proof}
Since $\gamma(s)=\big(x(s), y(s), z(s)\big)$, we have $U=x'\mathring{e}_1+y' \mathring{e}_2$ and $V=-y'\mathring{e}_1+x'\mathring{e}_2$. By taking the derivative with respect to $t$ for any fixed $s$, one has that $\frac{d}{dt}\mathcal{C}_s=f' U+g'V+h'T=(f'x'-g'y')\mathring{e}_1+(f'y'+g'x')\mathring{e}_2+h'T$. Then the horizontal length of the curve $\mathcal{C}_s$
\begin{align*}
length(\mathcal{C}_s)&=\int_0^{t_0} |\frac{d}{dt}\mathcal{C}_s(t)| dt \\
&=\int_0^{t_0} \big[ (f'x'-g'y')^2+(f'y'+g'x')^2 \big]^{1/2}  dt \\
&=\int_0^{t_0}\big[\big((f')^2+(g')^2\big)\big((x')^2+(y')^2\big)\big]^{1/2} dt \\
&=\int_0^{t_0} \big((f')^2+(g')^2 \big)^{1/2} dt.
\end{align*}
Therefore, the horizontal arc-length of $\mathcal{C}_s$ is completely determined by the integral of the term $\sqrt{(f')^2+(g')^2}$ and the result follows.
\end{proof}

Next, we derive the formula of p-areas for parametrized surfaces in terms of the double integral of its parameters.

\begin{proposition}\label{pareaform1}
Given a parametrized surface $\Sigma: \mathbb{X}(s,t)=\{\big( X(s,t), Y(s,t), Z(s,t)\big), \forall (s,t)\in D\subset \mathbb{R}^2 \}$ in $\mathbb{H}_1$ for some domain $D\subset \mathbb{R}^2$, the p-area form $d\Sigma_p$ at the nonsingular point $p\in \Sigma$ can be represented in terms of the components of $\mathbb{X}$ and its derivatives pointwise, namely,
\begin{align}\label{pareaform}
d\Sigma_p = \sqrt{A^2+B^2}ds dt,
\end{align}
where
\begin{align}
A&:=Z_s X_t- X_s Z_t +X(X_tY_s-X_sY_t), \label{AB} \\
B&:=Z_s Y_t- Y_s Z_t+ Y(X_tY_s -X_sY_t), \nonumber
\end{align}
and the subscripts denote the partial derivatives w.r.t. the specific variables.
The p-area of the surface $\Sigma$ is given by
\begin{align}\label{parea}
\mathcal{A}{(\Sigma)}=\int_{p\in \Sigma}d\Sigma_p=\iint_{(s,t)\in D} \sqrt{A^2+B^2}dsdt.
\end{align}
\end{proposition}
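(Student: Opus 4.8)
The plan is to pull the p-area $2$-form $\Theta\wedge e^1$ back by $\mathbb{X}$ and evaluate it on the coordinate fields $\mathbb{X}_s,\mathbb{X}_t$, since $d\Sigma_p=\big(\Theta\wedge e^1\big)(\mathbb{X}_s,\mathbb{X}_t)\,ds\,dt$ up to the orientation sign. Write $p=\mathbb{X}(s,t)=(X,Y,Z)$. First I would re-express the Euclidean partials in the left-invariant frame using $\partial_x=\mathring e_1-y\,T$, $\partial_y=\mathring e_2+x\,T$, $\partial_z=T$ at $p$, which gives, for $\bullet\in\{s,t\}$,
\[
\mathbb{X}_\bullet=X_\bullet\mathring e_1+Y_\bullet\mathring e_2+\Theta(\mathbb{X}_\bullet)\,T,
\]
with $\Theta(\mathbb{X}_s)=Z_s+XY_s-YX_s$ and $\Theta(\mathbb{X}_t)=Z_t+XY_t-YX_t$. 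Since $\xi_p=\ker\Theta$, nonsingularity of $p$ means exactly that $\big(\Theta(\mathbb{X}_s),\Theta(\mathbb{X}_t)\big)\neq(0,0)$, and in that case $T_p\Sigma\cap\xi_p$ is spanned by the (automatically $T$-free) vector
\[
W:=\Theta(\mathbb{X}_t)\,\mathbb{X}_s-\Theta(\mathbb{X}_s)\,\mathbb{X}_t=\big(\Theta(\mathbb{X}_t)X_s-\Theta(\mathbb{X}_s)X_t\big)\mathring e_1+\big(\Theta(\mathbb{X}_t)Y_s-\Theta(\mathbb{X}_s)Y_t\big)\mathring e_2,
\]
so that the characteristic field is $e_1=W/|W|$.

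Next I would make $e^1$ explicit. As $\{\mathring e_1,\mathring e_2,T\}$ is orthonormal with $\mathring e^1=dx$, $\mathring e^2=dy$, writing $e_1=(\cos\phi)\mathring e_1+(\sin\phi)\mathring e_2$ forces $e_2=Je_1=-(\sin\phi)\mathring e_1+(\cos\phi)\mathring e_2$ and $e^1=(\cos\phi)\,dx+(\sin\phi)\,dy$, where $\cos\phi,\sin\phi$ are the components of $W/|W|$. Then, using $\big(\Theta\wedge e^1\big)(\mathbb{X}_s,\mathbb{X}_t)=\Theta(\mathbb{X}_s)\,e^1(\mathbb{X}_t)-\Theta(\mathbb{X}_t)\,e^1(\mathbb{X}_s)$ together with $e^1(\mathbb{X}_\bullet)=(\cos\phi)X_\bullet+(\sin\phi)Y_\bullet$, I would collect terms; by a Lagrange-type identity the bracket collapses to a perfect sum of squares:
\[
\big(\Theta\wedge e^1\big)(\mathbb{X}_s,\mathbb{X}_t)=-\frac{\big(\Theta(\mathbb{X}_t)X_s-\Theta(\mathbb{X}_s)X_t\big)^2+\big(\Theta(\mathbb{X}_t)Y_s-\Theta(\mathbb{X}_s)Y_t\big)^2}{|W|}=-|W|.
\]
Finally, expanding $\Theta(\mathbb{X}_t)X_s-\Theta(\mathbb{X}_s)X_t$ and $\Theta(\mathbb{X}_t)Y_s-\Theta(\mathbb{X}_s)Y_t$ and cancelling shows these equal $-A$ and $-B$ respectively, with $A,B$ as in \eqref{AB}, hence $|W|=\sqrt{A^2+B^2}$. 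The orientation convention that $d\Sigma$ be positive w.r.t.\ the induced orientation on $\Sigma$ fixes the sign, so $d\Sigma_p=\sqrt{A^2+B^2}\,ds\,dt$, and integrating over $D$ yields \eqref{parea}. The singular set carries no area, and $d\Sigma$ extends continuously by zero there, as recalled in Section \ref{Preliminary}.

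I expect the only real obstacle to be bookkeeping: carrying through the expansion in the middle step, spotting the identity that reduces it to $-|W|$, and then tracking the orientation sign so that $d\Sigma_p$ comes out nonnegative. One could shorten the algebra conceptually---on the $2$-plane $T_p\Sigma$ the $1$-forms $\Theta$ and $e^2$ share the kernel $\mathbb{R}e_1$, so $\Theta|_{T_p\Sigma}=\mu\,e^2|_{T_p\Sigma}$ for some scalar $\mu$ and $\big(\Theta\wedge e^1\big)|_{T_p\Sigma}=-\mu\,(dx\wedge dy)|_{T_p\Sigma}$ since $e^1\wedge e^2=dx\wedge dy$, after which evaluating on a vector transverse to $e_1$ pins down $\mu$---but I would still present the direct computation, as it remains valid even in the degenerate configurations where the $xy$-Jacobian $X_sY_t-X_tY_s$ vanishes.
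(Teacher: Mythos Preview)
Your argument is correct and follows essentially the same architecture as the paper's proof: identify the characteristic direction $e_1\in T_p\Sigma\cap\xi_p$, write $e_1=(\cos\phi)\mathring e_1+(\sin\phi)\mathring e_2$ and its dual $e^1=(\cos\phi)\,dx+(\sin\phi)\,dy$, then pull $\Theta\wedge e^1$ back to the $(s,t)$-domain. The one point of divergence is how the characteristic direction is located. The paper works extrinsically in Euclidean $\mathbb{R}^3$: it forms the surface normal $N_1=\mathbb{X}_s\times\mathbb{X}_t$, the contact-plane normal $N_2=\mathring e_1\times\mathring e_2=(-Y,X,1)$, and takes $e_1$ parallel to $N_1\times N_2$, which directly yields the coefficients $A,B$. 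You instead obtain the characteristic direction intrinsically as the unique (up to scale) combination $W=\Theta(\mathbb{X}_t)\mathbb{X}_s-\Theta(\mathbb{X}_s)\mathbb{X}_t$ lying in $\ker\Theta$, and then observe that its $\mathring e_1,\mathring e_2$ components are $-A,-B$. Your route avoids the auxiliary Euclidean cross product and makes the role of the contact form more transparent; the paper's route has the advantage that $A,B$ appear immediately without a separate expansion step. From the point where $\cos\phi,\sin\phi$ are in hand, the two computations coincide (up to the overall sign of $e_1$, which the orientation convention resolves in both cases).
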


\begin{proof}
By the definition of p-areas \eqref{parea0}, first we seek the vector $e_1(p)\in T_pM\cap \xi_p$. Taking the derivatives with respect to the variables $s$ and $t$ on $\mathbb{X}$ respectively, one has that
\begin{align*}
\mathbb{X}_s(s,t)=(X_s, Y_s, Z_s), \ \mathbb{X}_t(s,t)=(X_t,Y_t, Z_t),
\end{align*}
and then use the (Euclidean) cross product to get the surface normal vector
\begin{align}\label{N1}
N_1=\mathbb{X}_s \times \mathbb{X}_t=(Z_tY_s-Z_sY_t, Z_sX_t-Z_tX_s, X_sY_t-X_tY_s).
\end{align}
In addition, for any point $p=(X(s,t), Y(s,t), Z(s,t))\in \Sigma$, one can use the standard basis $\mathring{e}_1= (1,0, Y), \mathring{e}_2=(0, 1, -X)$ of the contact plane $\xi_p=span\{\mathring{e}_1(p), \mathring{e}_2(p)\}$ to have the vector $N_2$ perpendicular to the contact plane, namely,
\begin{align}\label{N2}
N_2=\mathring{e}_1 \times \mathring{e}_2 = (-Y, X, 1).
\end{align}
Since the vector along $T_pM \cap \xi_p$ is perpendicular to the normal vectors $N_1$ and $N_2$ of $T_pM$ and $\xi$, respectively, in the usual Euclidean metric, it must be parallel to the cross product of $N_1$ and $N_2$. Thus, by \eqref{N1}, \eqref{N2}, the vector $e_1\in T_pM\cap \xi_p$ is parallel to the vector
\begin{align*}
N_1 \times N_2 =
\Big(Z_s X_t- X_s Z_t +X(X_tY_s-X_sY_t)\Big)\mathring{e}_1 + \Big( Z_s Y_t- Z_t Y_s + Y(X_tY_s -X_sY_t) \Big)\mathring{e}_2.
\end{align*}
Set $A=Z_s X_t- X_s Z_t +X(X_tY_s-X_sY_t)$ and $B=Z_s Y_t- Z_t Y_s + Y(X_tY_s -X_sY_t)$, and we can choose the (horizontal) unit vector $e_1\in T_pM\cap \xi_p$ and $e_2= Je_1$ as
\begin{align}\label{e1e2}
\begin{array}{rl}
e_1&=\frac{A}{\sqrt{A^2+B^2}}\mathring{e}_1 + \frac{B}{\sqrt{A^2+B^2}}\mathring{e}_2 := (\cos\phi) \mathring{e}_1 + (\sin\phi) \mathring{e}_2, \\
e_2&=\frac{-B}{\sqrt{A^2+B^2}}\mathring{e}_1 + \frac{A}{\sqrt{A^2+B^2}}\mathring{e}_2:= (-\sin\phi) \mathring{e}_1 + (\cos\phi )\mathring{e}_2,
\end{array}
\end{align}
where $\phi$ is the angle between $\mathring{e}_1$ and $e_1$ following the orientation defined on the contact plane. Now we have the orthonormal frame $\{e_1, e_2, T \}$ w.r.t. the Levi-metric in $\mathbb{H}_1$. Since the standard orthonormal frame $\{\mathring{e}_1, \mathring{e}_2, T\}$ and its coframe $\{dx, dy, \Theta\}$ are dual to each other, by \eqref{e1e2}, the dual basis $\{e^1,e^2, \Theta\}$ of $\{e_1, e_2, T\}$ can be represented by the linear combination of the coframe $dx,dy, dz$, namely,
\begin{align}\label{duale1e2}
\begin{array}{rl}
e^1&= (\cos\phi) dx + (\sin\phi) dy, \\
e^2&= (-\sin\phi)dx+ (\cos\phi) dy, \\
\Theta&=dz+xdy-ydx.
\end{array}
\end{align}
When restrict the infinitesimal changes $dx, dy, dz$ on the surface $\Sigma$, one has that $dx=X_sds +X_tdt$, $dy = Y_sds+ Y_tdt$, $dz=Z_sds+Z_tdt$. Substitute $dx, dy, dz$ into \eqref{duale1e2} to have
\begin{align}\label{duale123}
\begin{array}{ccc}
e^1&= &( \cos\phi\  X_s + \sin\phi \ Y_s )ds + ( \cos\phi \ X_t + \sin\phi \ Y_t)dt, \\
e^2&= &( -\sin\phi\  X_s + \cos\phi \ Y_s )ds + ( -\sin\phi \ X_t + \cos\phi \ Y_t)dt, \\
\Theta&=& (Z_s+XY_s-YX_s)ds + (Z_t+XY_t -YX_t)dt.
\end{array}
\end{align}
Note that the p-area form is well-defined on the whole surface $\Sigma$ (up to a sign), i.e., making $d\Sigma$ vanished at singular points. Thus, by \eqref{e1e2} and \eqref{duale123} we have reached the p-area form
\begin{align}
\Theta\wedge e^1
&= \Big[-\cos\phi\big(Z_tX_s -Z_s X_t +X(X_s Y_t-X_tY_s)\big)- \sin\phi \big(Z_t Y_s-Z_s Y_t + Y(X_s Y_t-X_t Y_s)\big)\Big]ds\wedge dt \\ \nonumber
&= (A \cos\phi  + B\sin\phi ) ds\wedge dt\\ \nonumber
&=\sqrt{A^2+B^2}ds\wedge dt,  \nonumber
\end{align}
and the result follows.
\end{proof}

Usually the computations for getting the functions $A,B$ in Proposition \ref{pareaform1} are complicated. The next lemma provides an relatively efficient method to simply the computations. Recall that the cross product $\bar{\times}$ with respect to the standard basis $\{\mathring{e}_1, \mathring{e}_2, T\}$ defined on the Heisenberg group $\mathbb{H}_1$ satisfies
\begin{align}\label{Hcrossproduct}
\mathring{e}_1 \bar\times \mathring{e}_2 =T, \ \mathring{e}_2 \bar\times T= \mathring{e}_1, \ T \bar\times \mathring{e}_1 =\mathring{e}_2,
\end{align}and is extended linearly on the coefficients. For instance, if $u_i=a_i \mathring{e}_1 +b_i\mathring{e}_2 + c_i T$ for $i=1,2$, then $u_1\bar\times u_2= (b_1c_2-b_2c_1)\mathring{e}_1 + (a_2c_1-a_1c_2)\mathring{e}_2 + (a_1b_2-a_2b_1)T $.

\begin{lemma}\label{ABinner}
The coefficients $A, B$ defined by \eqref{AB} can be represented in terms of the inner product of the standard basis $\mathring{e}_1, \mathring{e}_2$, and the vector $\mathbb{X}_s\bar{\times} \mathbb{X}_t$, with respect of the Levi-metric, namely,
\begin{align}\label{innerAB}
A& =\langle \mathbb{X}_s \bar\times \mathbb{X}_t, \mathring{e}_2 \rangle, \\ \nonumber
B& =\langle \mathbb{X}_s \bar\times \mathbb{X}_t, -\mathring{e}_1 \rangle,
\end{align}where $\bar{\times}$ is the cross product in $\mathbb{H}_1$ defined by \eqref{Hcrossproduct}.
\end{lemma}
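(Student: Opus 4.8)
The plan is to expand the two tangent vectors $\mathbb{X}_s$ and $\mathbb{X}_t$ in the left-invariant frame $\{\mathring{e}_1,\mathring{e}_2,T\}$ and then apply the bilinear cross-product rule \eqref{Hcrossproduct} recorded just before the statement. First I would record the elementary change-of-frame identity: at a point $p=(X,Y,Z)$ we have $\mathring{e}_1=(1,0,Y)$, $\mathring{e}_2=(0,1,-X)$, $T=(0,0,1)$ in Euclidean coordinates, so an arbitrary Euclidean vector $(a,b,c)$ based at $p$ decomposes as $(a,b,c)=a\,\mathring{e}_1+b\,\mathring{e}_2+(c-aY+bX)\,T$. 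Applying this to $\mathbb{X}_s=(X_s,Y_s,Z_s)$ and $\mathbb{X}_t=(X_t,Y_t,Z_t)$ yields $\mathbb{X}_s=X_s\mathring{e}_1+Y_s\mathring{e}_2+(Z_s-X_sY+Y_sX)\,T$ and the analogous expression for $\mathbb{X}_t$ with the index $s$ replaced by $t$.

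Next I would substitute these into the explicit formula $u_1\bar\times u_2=(b_1c_2-b_2c_1)\mathring{e}_1+(a_2c_1-a_1c_2)\mathring{e}_2+(a_1b_2-a_2b_1)T$ stated in the excerpt, with $(a_1,b_1,c_1)=(X_s,Y_s,Z_s-X_sY+Y_sX)$ and $(a_2,b_2,c_2)=(X_t,Y_t,Z_t-X_tY+Y_tX)$. In the $\mathring{e}_2$-coefficient the two products $X_sX_tY$ cancel, leaving exactly $Z_sX_t-X_sZ_t+X(X_tY_s-X_sY_t)=A$; in the $\mathring{e}_1$-coefficient the two products $X Y_sY_t$ cancel, leaving $Z_sY_t-Y_sZ_t-Y(X_tY_s-X_sY_t)=-B$. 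Thus $\mathbb{X}_s\bar\times\mathbb{X}_t=-B\,\mathring{e}_1+A\,\mathring{e}_2+(X_sY_t-X_tY_s)\,T$.

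Finally, since $\{\mathring{e}_1,\mathring{e}_2,T\}$ is orthonormal with respect to the Levi-metric $\langle\cdot,\cdot\rangle$, pairing the displayed expression against $\mathring{e}_2$ and against $-\mathring{e}_1$ immediately gives $\langle\mathbb{X}_s\bar\times\mathbb{X}_t,\mathring{e}_2\rangle=A$ and $\langle\mathbb{X}_s\bar\times\mathbb{X}_t,-\mathring{e}_1\rangle=B$, which is \eqref{innerAB}.

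The argument is entirely computational; the only point requiring care is the bookkeeping of the $T$-components when passing from Euclidean coordinates to the invariant frame, since these are precisely the terms that generate the $X(\,\cdot\,)$ and $Y(\,\cdot\,)$ corrections appearing in $A$ and $B$. I do not anticipate any genuine obstacle beyond this routine verification.
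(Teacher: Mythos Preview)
Your proposal is correct and follows essentially the same route as the paper's proof: both expand $\mathbb{X}_s$ and $\mathbb{X}_t$ in the invariant frame $\{\mathring{e}_1,\mathring{e}_2,T\}$, compute the Heisenberg cross product, and read off the $\mathring{e}_1$- and $\mathring{e}_2$-coefficients as $-B$ and $A$. One small slip: in your displayed $\mathring{e}_1$-coefficient the first two terms should read $Y_sZ_t-Z_sY_t$ rather than $Z_sY_t-Y_sZ_t$; with that correction the expression does equal $-B$, and your final conclusion is unaffected.
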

\begin{proof}
To simply the computation we ignore the variables $s, t$, and use the same notations as before. For any point $\mathbb{X}$ on the surface $\Sigma$, we have the representation
\begin{align*}
\mathbb{X}=(X,Y,Z)= X\mathring{e}_1 +Y\mathring{e}_2 + Z T
\end{align*}with the partial derivatives
\begin{align}
\mathbb{X}_s&=(X_s, Y_s, Z_s)=X_s \mathring{e}_1 + Y_s \mathring{e}_2 +(-YX_s +XY_s +Z_s)T, \\
\mathbb{X}_t&=(X_t, Y_t, Z_t)=X_t \mathring{e}_1 + Y_t \mathring{e}_2 +(-YX_t +XY_t +Z_t)T. \nonumber
\end{align}

Using \eqref{Hcrossproduct}, a straight forward computation shows that
\begin{align*}
\mathbb{X}_s\bar\times \mathbb{X}_t
& = \big(   Z_tY_s-Z_sY_t + Y(X_sY_t -X_tY_s )\big)\mathring{e}_1 + \big( Z_sX_t-Z_tX_s + X(X_tY_s - X_sY_t )\big)\mathring{e}_2 \\
&\hspace{1cm}+ (X_sY_t-X_tY_s) T \\
& \overset{\eqref{AB}}{=} -B \mathring{e}_1 +A \mathring{e}_2 +(X_sY_t-X_tY_s) T.
\end{align*}
In conclusion, the coefficients $A, B$ above can be obtained by using the inner product induced from the Levi-metric, namely,
\begin{align*}
A& =\langle \mathbb{X}_s \bar\times \mathbb{X}_t, \mathring{e}_2 \rangle, \\ \nonumber
B& =\langle \mathbb{X}_s \bar\times \mathbb{X}_t, -\mathring{e}_1 \rangle.
\end{align*}
\end{proof}

The next two examples show that the p-areas and the usual $\mathbb{R}^2$-areas coincide when the region is on the vertical planes, but not on the horizontal planes.
\begin{example}\label{ex1}
Consider the vertical plane (perpendicular to the $xy$-plane) such that its projection onto the $xy$-plane is a line with slope $a\in (-\infty, \infty)$. The vertical plane can be parametrized by $\mathbb{X}(s,t)=(s,as+b, t)$ for any $s,t, b\in \mathbb{R}$. Let $\mathcal{R}$ be a plane region on the vertical plane with boundary, then $\mathcal{R}$ can be given by $\mathbb{X}(s,t)=(s,as+b, t)$ for $(s,t)$ in some domain $D\subset \mathbb{R}^2$. Set $X(s,t)=s, Y(s,t)=as+b, Z(s,t)=t$ and use \eqref{AB}\eqref{pareaform}, one gets $A=-1$, $B=-a$, and
$$\mathcal{A}(\mathcal{R})=\iint_D \sqrt{1+a^2} dsdt=\iint_D |\mathbb{X}_s \times \mathbb{X}_t| dsdt =\mathcal{A}_{\mathbb{R}^2}(\mathcal{R}),$$
where $\mathcal{A}_{\mathbb{R}^2}(\mathcal{R})$ is the usual Euclidean area of $\mathcal{R}$. Similarly, the p-area and the $\mathbb{R}^2$-area of any region $\mathcal{R}$ coincide if $\mathcal{R}$ is on the $yz$-plane.
\end{example}

\begin{example}\label{ex2}
Consider the graph $\mathbb{X}(s,t)=(s,t,f(s,t))$ for some function $f(s,t)$ defined on $D\subset \mathbb{R}^2$ and denote its components by $X(s,t)=s, Y(s,t)=s, Z(s,t)=f(s,t)$. It is an immediate result that the p-area for the graph is given by $\mathcal{A}(\mathbb{X})=\iint_D\sqrt{(s-f_t)^2 + (t+f_s)^2}ds dt$ by \eqref{pareaform}, which coincides with the formula of p-areas for graphs derived in \cite[(2.7), p261]{cheng2007} or see \eqref{pareagraph}. When $f\equiv const.$, the $\mathcal{A}(\mathbb{X})=\iint_D\sqrt{s^2+t^2}dsdt$, which is \textit{not} equal to the $\mathbb{R}^2$-area of the graph.
\end{example}

One has an immediate application of Example \ref{ex1}. The following result shows that Pappus-Guldin Theorem for volumes holds in $\mathbb{H}_1$ when the domain is obtained by parallel moving a plane region along a line on the $xy$-plane.

\begin{corollary}
Let $\gamma(s)$ be a line segment with horizontal length $s_0$ on the $xy$-plane in $\mathbb{H}_1$ and $P$ a plane perpendicular to $\gamma(s)$. Suppose $\mathcal{R}\subset P$ is a plane region. Then the volume of the solid cylindrical domain $M=\mathcal{R}\times \gamma$ obtained by parallel moving $\mathcal{R}$ along $\gamma$ satisfies Pappus-Guldin Theorem. By parallel moving we mean that when the region $\mathcal{R}$ moves along the line segment $\gamma$, $\mathcal{R}$ is always perpendicular to $\gamma$ in the Euclidean sense.
\end{corollary}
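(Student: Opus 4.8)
The plan is to exploit that, in this rigid configuration, both quantities in the statement collapse to their Euclidean analogues, so that the assertion reduces to the elementary fact that a right prism has volume equal to base area times height. First I would record the two identifications. Computing the exterior derivative gives $d\Theta = 2\,dx\wedge dy$, hence $\frac{1}{2}\,\Theta\wedge d\Theta = \frac{1}{2}(dz + x\,dy - y\,dx)\wedge 2\,dx\wedge dy = dx\wedge dy\wedge dz$; so $V(M) = \frac{1}{2}\int_M\Theta\wedge d\Theta$ is literally the Euclidean volume $\mathrm{Vol}_{\mathbb{R}^3}(M)$. For the cross-section: since $\gamma$ is a line segment in the $xy$-plane, every plane $P$ Euclidean-perpendicular to $\gamma$ contains the $z$-direction, i.e.\ it is a vertical plane whose projection onto the $xy$-plane is a straight line, so $\mathcal{R}\subset P$ is exactly a region of the type treated in Example \ref{ex1}; therefore $\mathcal{A}(\mathcal{R}) = \mathcal{A}_{\mathbb{R}^2}(\mathcal{R})$, its ordinary planar area. (If one prefers to land on the two explicit models of Example \ref{ex1}, one may first apply an element of $U(1)\subset PHS(1)$, under which both $V$ and $\mathcal{A}$ are invariant, to rotate the direction of $\gamma$ onto that of $\mathring{e}_1$, so that $P$ becomes the $yz$-plane up to a left translation, which also preserves $\mathcal{A}$.)

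Next I would check that the horizontal length is the appropriate notion of length for $\gamma$ here. For a segment lying in the $xy$-plane one has $z'\equiv 0$, so $|\gamma'_\xi| = \sqrt{(x')^2+(y')^2}$ coincides with the Euclidean speed; hence a horizontal-arc-length parametrization on $[0,s_0]$ is a Euclidean unit-speed parametrization, and $\gamma$ has Euclidean length $s_0$ as well. Since parallel moving $\mathcal{R}$ along $\gamma$ means Euclidean translation in the constant direction of $\gamma$, and that direction is by hypothesis normal to the plane $P$ containing $\mathcal{R}$, the solid $M = \mathcal{R}\times\gamma$ is a Euclidean right cylinder over $\mathcal{R}$ of height $s_0$. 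Using the standing embeddedness assumptions $(A1)$, $(A2)$, so that $M$ does not overlap itself, Cavalieri's principle (or Fubini) gives $\mathrm{Vol}_{\mathbb{R}^3}(M) = \mathcal{A}_{\mathbb{R}^2}(\mathcal{R})\cdot s_0$.

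Chaining these equalities, $V(M) = \mathrm{Vol}_{\mathbb{R}^3}(M) = \mathcal{A}_{\mathbb{R}^2}(\mathcal{R})\cdot s_0 = \mathcal{A}(\mathcal{R})\cdot s_0$, which is exactly the Pappus-Guldin volume formula of Theorem A in the limiting straight-axis case, with the p-area in place of the planar area and $s_0$ the horizontal length of $\gamma$. There is essentially no technical obstacle; the single point worth stressing --- and the reason the hypotheses are phrased as they are --- is that the two potentially non-Euclidean quantities, the volume $V$ and the p-area $\mathcal{A}$, reduce to their Euclidean counterparts precisely when $\gamma$ lies in the $xy$-plane and $\mathcal{R}$ sits in a perpendicular (hence vertical) plane; for $\gamma$ or $\mathcal{R}$ in general position neither reduction holds, and the clean Pappus-Guldin identity degenerates into the additional curvature and normality terms appearing in Theorem \ref{mainvolume}.
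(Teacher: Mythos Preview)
Your argument is correct and follows essentially the same route as the paper: reduce the Heisenberg volume to the Euclidean volume, invoke Example~\ref{ex1} to identify $\mathcal{A}(\mathcal{R})$ with the Euclidean area on a vertical plane, observe that horizontal length equals Euclidean length for curves in the $xy$-plane, and then use the Euclidean prism/Pappus formula. The only cosmetic differences are that you make the identity $\frac{1}{2}\Theta\wedge d\Theta = dx\wedge dy\wedge dz$ explicit (the paper takes this as known from the definition) and you phrase the Euclidean step as Cavalieri/Fubini rather than as Theorem~A, which for a straight axis is the same statement.
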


\begin{proof}
If $\gamma, \mathcal{R}\subset P$, and $M$ are defined as the assumptions, Pappus-Guldin Theorem for volumes in the Euclidean spaces (Theorem A) implies that
$$Volume(M)= \mathcal{A}_{\mathbb{R}^2}(\mathcal{R})\cdot L,$$
where $L$ is the Euclidean length of $\gamma$. Additionally, by the construction and Example \ref{ex1}, we have $\mathcal{A}(\mathcal{R})=\mathcal{A}_{\mathbb{R}^2}(\mathcal{R})$; moreover, any curve contained in the $xy$-plane has the same horizontal and Euclidean lengths, namely, $s_0=L$. Therefore, $Volume(M)=\mathcal{A}(\mathcal{R})\cdot s_0$ and we complete the proof.
\end{proof}

Given a domain $M\subset \mathbb{H}_1$ with boundary $\partial M$, and let $\{ e_1, e_2, T\}$ be an orthonormal frame w.r.t. Levi-metric in $\mathbb{H}_1$ such that $e_1\in TM\cap \xi$ and $e_2=Je_1$. Denote by $\{e^1, e^2, \Theta\}$ the corresponding coframe of $\{e_1, e_2, T\}$. Next we show the divergence theorem, which is inspired by \cite[Lemma 3.2]{CCW}.

\begin{proposition}[Divergence Theorem]\label{Divergence}
Given any domain $M$ with boundary $\partial M$ in the 3-dimensional Heisenberg group $\mathbb{H}_1$. For any vector field $P=P_1 e_1 +P_2 e_2+P_3 T$ defined on the tubular neighborhood of $M$ for any $C^1$-functions $P_1, P_2, P_3$, we have
\begin{align*}
\iiint_M div_b P d\mu =\iint_{\partial M} P_2d\Sigma_p,
\end{align*}where $div_b$ is the subdivergence, the volume form $d\mu = \Theta \wedge e^1 \wedge e^2$ and the p-area form $d\Sigma_p=\Theta \wedge e^1$ for $e_1\in TM\cap \xi$ and $e_2=Je_1$.
\end{proposition}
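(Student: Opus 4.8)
The plan is to deduce the identity from Stokes' theorem by producing a $2$-form $\Omega$ on the tubular neighborhood of $M$ with $d\Omega=(\operatorname{div}_b P)\,d\mu$ on $M$ and $\Omega|_{\partial M}=P_2\,d\Sigma_p$ on the regular part of $\partial M$. The natural choice is the contraction of the volume form with the \emph{horizontal} part of $P$ only: set $P^\xi:=P_1e_1+P_2e_2$ and $\Omega:=\iota_{P^\xi}(d\mu)=\iota_{P^\xi}(\Theta\wedge e^1\wedge e^2)$. Using $\iota_{e_1}(\Theta\wedge e^1\wedge e^2)=-\Theta\wedge e^2$ and $\iota_{e_2}(\Theta\wedge e^1\wedge e^2)=\Theta\wedge e^1$, this is $\Omega=-P_1\,\Theta\wedge e^2+P_2\,\Theta\wedge e^1$; note $P^\xi$, hence $\Omega$, is independent of the choice of orthonormal horizontal frame.

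First I would compute $d\Omega$. The cleanest route is to work in the standard left-invariant coframe $\{dx,dy,\Theta\}$, for which the pseudo-hermitian connection form vanishes and $d\mu=\Theta\wedge dx\wedge dy=dx\wedge dy\wedge dz$ is just the Euclidean volume form. Writing $P=Q_1\mathring e_1+Q_2\mathring e_2+Q_3T$ in the frame $\{\mathring e_1,\mathring e_2,T\}$, one has $P^\xi=Q_1\partial_x+Q_2\partial_y+(yQ_1-xQ_2)\partial_z$, so $\Omega=\iota_{P^\xi}(dx\wedge dy\wedge dz)$ and $d\Omega$ equals the Euclidean divergence of $P^\xi$ times $d\mu$; a short calculation using $\mathring e_1=\partial_x+y\partial_z$, $\mathring e_2=\partial_y-x\partial_z$ gives $\partial_xQ_1+\partial_yQ_2+\partial_z(yQ_1-xQ_2)=\mathring e_1(Q_1)+\mathring e_2(Q_2)=\operatorname{div}_bP$, the last equality because the subdivergence is the $\xi$-trace $\langle\nabla_{e_1}P,e_1\rangle+\langle\nabla_{e_2}P,e_2\rangle$ of the flat, torsion-free pseudo-hermitian connection, for which $\mathring e_1,\mathring e_2$ are parallel. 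Thus $d\Omega=(\operatorname{div}_bP)\,d\mu$. (Equivalently, one may keep the frame $\{e_1,e_2,T\}$ adapted to $\partial M$ and expand $d\Omega$ via the structure equations $d\Theta=2e^1\wedge e^2$, $de^1=\omega\wedge e^2$, $de^2=-\omega\wedge e^1$; the $d\Theta$-terms and all repeated-factor terms drop out and the remainder collects to $[e_1(P_1)+e_2(P_2)+\omega(e_2)P_1-\omega(e_1)P_2]\,d\mu=(\langle\nabla_{e_1}P,e_1\rangle+\langle\nabla_{e_2}P,e_2\rangle)\,d\mu$.)

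Next I would restrict $\Omega$ to $\partial M$. At a nonsingular $p\in\partial M$ choose $\{e_1,e_2,T\}$ with $e_1$ spanning the characteristic line $T_p(\partial M)\cap\xi_p$ and $e_2=Je_1$ the Legendrian normal. Since $\Theta(e_1)=e^2(e_1)=0$, the $2$-form $\Theta\wedge e^2$ annihilates every pair containing $e_1$, hence restricts to $0$ on $\Lambda^2T_p(\partial M)$ (which is spanned by $e_1\wedge w$), leaving $\iota_{\partial M}^*\Omega=P_2\,\iota_{\partial M}^*(\Theta\wedge e^1)=P_2\,d\Sigma_p$ by the definition of the p-area form $d\Sigma_p=\Theta\wedge e^1$ (and Proposition~\ref{pareaform1}, under which $\Theta\wedge e^1$ pulls back to $\sqrt{A^2+B^2}\,ds\wedge dt$). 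The singular set of $\partial M$ has zero p-area measure, as recalled in Section~\ref{Preliminary} via \cite{CHMY}, so deleting it changes neither integral, and after exhausting $\partial M$ by its regular part the Stokes identity $\iiint_M d\Omega=\iint_{\partial M}\iota_{\partial M}^*\Omega$ yields exactly $\iiint_M(\operatorname{div}_bP)\,d\mu=\iint_{\partial M}P_2\,d\Sigma_p$; the orientations match because the sign of $e_2$ was fixed precisely so that $d\Sigma_p$ is positive for the induced boundary orientation.

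I expect the only genuinely delicate points to be: (a) contracting $d\mu$ with the horizontal part $P^\xi$ rather than with all of $P$ — with the full $P$ one gets an extra term $P_3\,e^1\wedge e^2$ in $\Omega$, which would add $T(P_3)\,d\mu$ to $d\Omega$ and, worse, contribute $P_3\,(e^1\wedge e^2)|_{\partial M}$ to the boundary integral, and this does not vanish since $\partial M$ need not contain the Reeb direction $T$; and (b) matching the lower-order (connection) terms in the computation of $d\Omega$ with the definition of the subdivergence, which is where the flatness and vanishing Webster torsion of $\mathbb{H}_1$ enter. The measure-zero treatment of the singular set of $\partial M$ is routine given the results quoted earlier.
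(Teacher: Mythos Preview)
Your proof is correct and follows essentially the same route as the paper: the $2$-form $\Omega=-P_1\,\Theta\wedge e^2+P_2\,\Theta\wedge e^1$ is exactly the one the paper differentiates, and your parenthetical computation via the structure equations and the connection form $\omega$ is verbatim the paper's argument, as is the use of $e_1\in T(\partial M)\cap\xi$ to kill $\Theta\wedge e^2|_{\partial M}$. Your framing of $\Omega$ as $\iota_{P^\xi}(d\mu)$ and your primary computation of $d\Omega$ in the standard frame (exploiting that $d\mu=dx\wedge dy\wedge dz$ and $\mathring e_1,\mathring e_2$ are $\nabla$-parallel) are minor but pleasant simplifications that the paper does not make.
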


\begin{proof}
Recall that (\cite[p175]{CHMY})
\begin{align*}
\nabla e_1 &=\omega \otimes e_2, \nabla e_2=-\omega \otimes e_1, \nabla T\equiv 0, \\
de^2 &=-\omega \wedge e^1 =-\omega(e_2)e^2\wedge e^1 \mod(\Theta), \\
de_1 &=\omega \wedge e^2  =\omega(e_1)e^1\wedge e^2 \mod(\Theta), \\
d\Theta&=2e^1\wedge e^2,
\end{align*}
where $\nabla$ is the pseudo-hermitian connection, $\omega$ is the connection $1$-form, and $e^1, e^2$ are the dual $1$-form of the vectors $e_1, e_2$, respectively. Thus, on one hand, taking the exterior derivatives
\begin{align}\label{div1}
d(-P_1\Theta\wedge e^2 &+P_2 \Theta\wedge e^1) \\
&=-e_1(P_1)e^1\wedge \Theta \wedge e^2 -P_1 d(\Theta \wedge e^2) +e_2(P_2)e^2\wedge \Theta \wedge e^1 +P_2 d(\Theta\wedge e^1) \nonumber \\
&=e_1(P_1)\Theta\wedge e^1\wedge e^2 +P_1\Theta \wedge de^2 +e_2(P_2)\Theta \wedge e^1\wedge e^2 -P_2\Theta\wedge de^1 \nonumber \\
&=\Big[ e_1(P_1)+e_2(P_2)+P_1\omega (e_2)-P_2\omega(e_1) \Big]\Theta \wedge e^1 \wedge e^2. \nonumber
\end{align}
On the other hand, using the orthogonality of $\{e_1, e_2, T\}$,
\begin{align}\label{div2}
div_b P :&=\langle \nabla_{e_1}P, e_1 \rangle +\langle \nabla_{e_2}P, e_2\rangle \\
&=\langle e_1(P_1)e_1+P_1\nabla_{e_1}e_1 +e_1(P_2)e_2 +P_2\nabla_{e_1}e_2+e_1(P_3)T+P_3\nabla_{e_1}T, e_1 \rangle \nonumber \\
&\hspace{1cm} +\langle e_2(P_1)e_1+P_1\nabla_{e_2}e_1 +e_2(P_2)e_2 +P_2\nabla_{e_2}e_2+e_2(P_3)T+P_3\nabla_{e_2}T , e_2 \rangle \nonumber \\
&=e_1(P_1)+e_2(P_2)+P_1\omega(e_2)-P_2\omega(e_1). \nonumber
\end{align}
Combine \eqref{div1}, \eqref{div2} we have
\begin{align}\label{div3}
div_b P\ \Theta\wedge e^1 \wedge e^2 = d(-P_1\Theta \wedge e^2 +P_2 \Theta \wedge e^1)
\end{align}
on $M$. Use \eqref{div3} and the Stoke's theorem,
\begin{align*}
\iiint_M div_b P d\mu &=\iiint_M d(-P_1\Theta \wedge e^2 +P_2 \Theta \wedge e^1) \\
&=\iint_{\partial M} -P_1 \Theta \wedge e^2 +P_2 \Theta \wedge e^1 \\
&=\iint_{\partial M} P_2 \Theta \wedge  e^1 \\
&=\iint_{\partial M} P_2 d\Sigma_p.
\end{align*}
Here we use the fact that $e_1\in TM\cap \xi$ implies that $\Theta \wedge e^2|_{\partial M}=0$ on the third identity above.
\end{proof}

By the divergence theorem, the volume of any domain $M\subset \mathbb{H}_1$ with boundary $\partial M$ can be represented by the boundary integral with the angle $\phi$ between $\mathring{e}_1$ and $e_1\in TM\cap \xi$. In the proof of next corollary, we also show that there is only one connection $1$-form $\omega=d\phi$, where $\phi$ is the angle between the standard unit vector $\mathring{e}_1$ and $e_1\in T\Sigma\cap\xi$.

\begin{corollary}\label{divcoro}
Let $M$ be a domain in the 3-dimensional Heisenberg group $\mathbb{H}_1$ with boundary $\partial M$. Then the volume of $M$ is given by
$$V(M)=\frac{1}{2}\iint_{p\in\partial M} (-x\sin\phi +y\cos\phi) d\Sigma_p,$$
where $p=(x,y,z)\in \partial M$ and $\phi=\phi(p)$ the angle between $\mathring{e}_1(p)$ and the characteristic unit vector $e_1(p)\in T_pM\cap \xi_p$. In addition, suppose the boundary $\partial M$ is described by the parametrized surface
$$\mathbb{X}(s,t)=\{\big( X(s,t), Y(s,t), Z(s,t)\big) | \forall (s,t)\in D\},$$
then we have the volume of $M$ (up to a sign)
\begin{align}\label{volume}
V(M)=\frac{1}{2}\iint_{(s,t)\in D} (-XB+YA) dsdt
\end{align}
where $A,B$ are defined by \eqref{AB}.
\end{corollary}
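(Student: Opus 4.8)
The plan is to derive both displayed formulas from the Divergence Theorem (Proposition~\ref{Divergence}) applied to one carefully chosen vector field, picking up the identity $\omega=d\phi$ along the way. First I would record that since $d\Theta=2e^1\wedge e^2$ the volume form is $d\mu=\Theta\wedge e^1\wedge e^2=\frac{1}{2}\Theta\wedge d\Theta$, so that $\int_M d\mu=V(M)$ by \eqref{parea0}. To see that the (unique) connection $1$-form is $\omega=d\phi$, I would use that $\mathbb{H}_1$ is the flat model of pseudo-hermitian manifolds, so the pseudo-hermitian connection annihilates the standard frame, $\nabla\mathring{e}_1=\nabla\mathring{e}_2=\nabla T=0$; differentiating the rotation relations in \eqref{e1e2} then gives $\nabla e_1=(d\phi)\otimes(-\sin\phi\,\mathring{e}_1+\cos\phi\,\mathring{e}_2)=(d\phi)\otimes e_2$, and comparing with $\nabla e_1=\omega\otimes e_2$ (used in the proof of Proposition~\ref{Divergence}) forces $\omega=d\phi$.

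The key step is the choice of the vector field: I would take $P:=x\mathring{e}_1+y\mathring{e}_2$, which in coordinates is the globally smooth field $P=x\,\partial_x+y\,\partial_y$. Its subdivergence is cleanest to compute in the standard frame, where the connection vanishes: from $\mathring{e}_1(x)=\mathring{e}_2(y)=1$ and $\mathring{e}_1(y)=\mathring{e}_2(x)=0$ one gets $\nabla_{\mathring{e}_1}P=\mathring{e}_1$, $\nabla_{\mathring{e}_2}P=\mathring{e}_2$, hence $div_b\,P=\langle\nabla_{\mathring{e}_1}P,\mathring{e}_1\rangle+\langle\nabla_{\mathring{e}_2}P,\mathring{e}_2\rangle=2$. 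On the other hand, by \eqref{e1e2} the $e_2$-component of $P$ is $P_2=\langle P,e_2\rangle=\langle x\mathring{e}_1+y\mathring{e}_2,\,-\sin\phi\,\mathring{e}_1+\cos\phi\,\mathring{e}_2\rangle=-x\sin\phi+y\cos\phi$.

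Feeding these into Proposition~\ref{Divergence} gives at once $2V(M)=\iiint_M div_b\,P\,d\mu=\iint_{\partial M}P_2\,d\Sigma_p=\iint_{\partial M}(-x\sin\phi+y\cos\phi)\,d\Sigma_p$, which is the first formula. For the parametrized version, on $\partial M=\mathbb{X}(D)$ I would substitute $x=X(s,t)$, $y=Y(s,t)$, together with $\cos\phi=A/\sqrt{A^2+B^2}$ and $\sin\phi=B/\sqrt{A^2+B^2}$ from \eqref{e1e2} and $d\Sigma_p=\sqrt{A^2+B^2}\,ds\,dt$ from \eqref{pareaform}; the radicals cancel and \eqref{volume} drops out. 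The ``up to a sign'' is precisely the sign ambiguity in the choice of the characteristic unit vector $e_1\in T_pM\cap\xi_p$, hence of $\phi$ and of the orientation used for $d\Sigma_p$.

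The point requiring the most care is that the hypotheses of Proposition~\ref{Divergence} are only literally available away from the singular set $\Sigma_0\subset\partial M$, where the adapted frame $\{e_1,e_2,T\}$ and the angle $\phi$ are undefined, so $P$ cannot be expressed in the adapted frame there. I would handle this in the standard way: by \cite[Theorem 3.3]{CHMY} (recalled in Section~\ref{Preliminary}) the set $\Sigma_0$ is a union of isolated points and $C^1$ curves, hence has measure zero for both $d\mu$ and $d\Sigma_p$; one removes an $\varepsilon$-tubular neighborhood of $\Sigma_0$, runs the argument on the complement, and lets $\varepsilon\to0$, using that the $p$-area form extends continuously across $\Sigma_0$ with value $0$ so that the boundary contributions over the removed pieces vanish in the limit. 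As an independent sanity check, one can also apply the classical divergence theorem in $\mathbb{R}^3$ to $x\,\partial_x+y\,\partial_y$, whose Euclidean flux $2$-form restricted to $\mathbb{X}(D)$ is exactly $(-XB+YA)\,ds\wedge dt$, which recovers \eqref{volume} directly because $\frac{1}{2}\Theta\wedge d\Theta=dx\wedge dy\wedge dz$.
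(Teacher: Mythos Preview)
Your proof is correct and follows essentially the same strategy as the paper's: apply Proposition~\ref{Divergence} to the horizontal position field $P=x\mathring{e}_1+y\mathring{e}_2$ (the paper also carries an immaterial $zT$ term), obtain $div_b P=2$, identify $P_2=-x\sin\phi+y\cos\phi$, and then substitute $\cos\phi,\sin\phi,d\Sigma_p$ in terms of $A,B$ via \eqref{e1e2} and \eqref{pareaform}. The main practical difference is that you compute $div_b P$ in the standard frame (where the connection form vanishes, by the frame-invariance of the horizontal trace), whereas the paper works in the adapted frame and verifies $e_1(P_1)+e_2(P_2)+P_1\omega(e_2)-P_2\omega(e_1)=2$ by a direct expansion through \eqref{e1p1}--\eqref{p1e2} and the Maurer--Cartan derivation of $\omega=d\phi$; your route is shorter, and your explicit handling of the singular set and the Euclidean sanity check are additions not present in the paper.
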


\begin{proof}
By setting $e_1=\cos \phi \mathring{e}_1 +\sin \phi \mathring{e}_2$ and $e_2= -\sin \phi \mathring{e}_1 + \cos \phi \mathring{e}_2$ for some angle $\phi$ between $\mathring{e}_1$ and $e_1$ at the nonsingular point on $\Sigma$, we can write the position vector $P$ at the point $p=(x,y,z)$ by
\begin{align}\label{positionvector}
P(p)&=(x,y,z) \\
&=x\mathring{e}_1(p)+y\mathring{e}_2(p)+zT \nonumber \\
&=\underbrace{(x\cos\phi +y\sin\phi)}_{:=P_1}e_1+\underbrace{(-x\sin\phi +y\cos\phi)}_{:=P_2}e_2+zT. \nonumber
\end{align}

We shall find $e_1(P_1)+e_2(P_2)+P_1 \omega(e_2)-P_2 \omega(e_1)$ by the following computations:

\begin{align}
\mathring{e}_1(P_1)&=(\frac{\partial}{\partial x}+y\frac{\partial}{\partial z})(x \cos \phi +y \sin \phi) \label{e1p1} \\
&=\cos \phi +(-x\sin \phi +y \cos \phi)\frac{\partial\phi}{\partial x}+(-yx\sin +y^2 \cos\phi)\frac{\partial \phi}{\partial z}. \nonumber  \\
\mathring{e}_1(P_2)&=(\frac{\partial}{\partial x}+y\frac{\partial}{\partial z})(-x \sin \phi +y\cos \phi) \label{e1p2} \\
&=-\sin\phi +(-x\cos \phi -y \sin \phi)\frac{\partial\phi}{\partial x}+(-xy\cos\phi -y^2 \sin\phi)\frac{\partial \phi}{\partial z}. \nonumber  \\
\mathring{e}_2(P_1)&=(\frac{\partial}{\partial x}-x\frac{\partial}{\partial z})(x \cos \phi +y \sin \phi) \label{e2p1} \\
&=\sin \phi +(-x\sin \phi +y \cos \phi)\frac{\partial\phi}{\partial y}+(-yx\cos\phi +x^2 \sin\phi)\frac{\partial \phi}{\partial z}. \nonumber  \\
\mathring{e}_2(P_2)&=(\frac{\partial}{\partial y}-x\frac{\partial}{\partial z})(-x \sin \phi +y\cos \phi) \label{e2p2}\\
&=\cos\phi +(-x\cos \phi -y \sin \phi)\frac{\partial\phi}{\partial y}+(xy\sin\phi +x^2 \cos\phi)\frac{\partial \phi}{\partial z}. \nonumber
\end{align}
By \eqref{e1p1}-\eqref{e2p2}, we have
\begin{align}\label{e1p1e2p2}
e_1(P_1)+e_2(P_2)&=\cos\phi \mathring{e}_1(P_1) +\sin\phi\mathring{e}_2(P_1)-\sin\phi\mathring{e}_1(P_2) +\cos\phi\mathring{e}_2(P_2) \\
&=2+y\frac{\partial \phi}{\partial x}-x\frac{\partial \phi}{\partial y}+(x^2+y^2)\frac{\partial \phi}{\partial z}. \nonumber
\end{align}

Next we recall the Maurer-Cartan form for $\mathbb{H}_1$ (see \cite[Sec. 3]{sharpe2000differential} or \cite[Sec. 2]{CHL} for more detail). Consider the transformation rule between the orthonormal frames $\{\mathring{e}_1, \mathring{e}_2, T\}$ and $\{e_1, e_2, T \}$ given by
\begin{align*}
\begin{pmatrix}
e_1 \\ e_2 \\ T
\end{pmatrix}=
\begin{pmatrix}
\cos\phi & \sin\phi & 0 \\
-\sin\phi & \cos\phi & 0 \\
0 & 0 & 1
\end{pmatrix}
\begin{pmatrix}
\mathring{e}_1 \\ \mathring{e}_2 \\ T
\end{pmatrix}:= g \cdot
\begin{pmatrix}
\mathring{e}_1 \\ \mathring{e}_2 \\ T
\end{pmatrix}.
\end{align*}
Then we can get the Maurer-Cartan form
\begin{align*}
g^{-1}dg &=
\begin{pmatrix}
\cos\phi & -\sin \phi & 0 \\
\sin\phi & \cos \phi & 0 \\
0 & 0 & 1
\end{pmatrix}\cdot
\begin{pmatrix}
-\sin\phi d\phi & \cos\phi d\phi & 0 \\
-cos\phi d\phi & -\sin\phi d\phi & 0 \\
0 & 0 &
\end{pmatrix}\\
&=
\begin{pmatrix}
0 & d\phi & 0 \\
-d\phi & 0 & 0 \\
0 & 0 & 0
\end{pmatrix}:=
\begin{pmatrix}
\omega_1^1 & \omega_1^2 & \omega_1^3 \\
\omega_2^1 & \omega_2^2 & \omega_2^3 \\
\omega_3^1 & \omega_3^2 & \omega_3^3
\end{pmatrix}
\end{align*}
and the connection $1$-form $\omega:=\omega_1^2 =d\phi$.
\begin{align}
P_2\omega(e_1)&=P_2 d\phi(e_1)=P_2 e_1(\phi) \label{p2e1} \\
&=(-x \sin\phi +y\cos\phi)(\cos\phi \mathring{e}_1(\phi)+\sin\phi \mathring{e}_2(\phi))  \nonumber \\
&=(-x\sin \phi \cos\phi +y\cos^2\phi )\mathring{e}_1(\phi) +(-x\sin^2\phi+y\sin\phi\cos\phi)\mathring{e}_2(\phi), \nonumber \\
P_1\omega(e_2)&=P_1 d\phi(e_2)=P_1 e_2(\phi) \label{p1e2} \\
&=(x \cos\phi +y\sin\phi)(-\sin\phi \mathring{e}_1(\phi)+\cos\phi \mathring{e}_2(\phi))  \nonumber  \\
&=(-x\sin\phi\cos\phi -y\sin^2\phi)\mathring{e}_1(\phi)+(x\cos^2\phi+y\sin\phi\cos\phi)\mathring{e}_2(\phi). \nonumber
\end{align}
Combine \eqref{e1p1e2p2}, \eqref{p2e1}, and \eqref{p1e2} one gets
\begin{align}\label{div=2}
e_1(P_1)+e_2(P_2)-P_2\omega(e_1)+P_1\omega(e_2)=2
\end{align}

Apply Proposition \ref{Divergence} to the position vector $P=P_1e_1+P_2e_2+P_3T$ defined in \eqref{positionvector} on $M$ and use \eqref{div2}, \eqref{div=2}
\begin{align*}
div_b P &=\langle \nabla_{e_1}P, e_1 \rangle +\langle \nabla_{e_2}P, e_2\rangle  \\
&=e_1(P_1)+e_2(P_2)+P_1\omega(e_2)-P_2\omega(e_1)=2,
\end{align*}and
finally we have
\begin{align*}
2 V(M)&=2 \iiint_M d\mu \\
&=\iiint_M div_b P d\mu \\
&=\iint_{\partial M} P_2 d\Sigma_p \\
&=\iint_{\partial M}(\frac{-XB}{\sqrt{A^2+B^2}}+\frac{YA}{\sqrt{A^2+B^2}})\sqrt{A^2+B^2}dsdt \\
&=\iint_{\partial M} (-XB+YA) ds dt.
\end{align*}
Here we have used \eqref{e1e2} in Proposition \ref{pareaform1} for the fourth identity above and the result follows.
\end{proof}

\begin{remark}
A geometric interpretation of \eqref{volume} can be represented as follows: in $\mathbb{H}_1$, since the unit p-normal vector $e_2=\frac{-B}{\sqrt{A^2+B^2}}\mathring{e}_1+\frac{A}{\sqrt{A^2+B^2}}\mathring{e}_2$ by \eqref{e1e2} and the position vector $P|_{\partial M}=(X,Y,Z)$, the integrand on the right-hand side of \eqref{volume} actually is the inner product
$$e_2\cdot P \ d\Sigma_p=-XB+YA \ dsdt$$
w.r.t. the Levi-metric. Similar to the result in the Euclidean space $\mathbb{R}^3$, when applying the divergence theorem to the position vector $P$ of a closed parametrized surface $\mathbb{X}(s,t)=\big(X(s,t),Y(s,t),Z(s,t)\big)$ for $(s,t)\in D$, the volume of the enclosed domain $M$ is given by
$$Volume(M)=\frac{1}{3}\iiint_M div P d\mu =\frac{1}{3}\iint_{\partial M} P\cdot nd\sigma, $$
where $n$ is the outward unit vector and $d\sigma$ is the area element.
\end{remark}

\begin{example}
A torus $M$ in $\mathbb{H}_1$ is given by rotating a circle with radius $r$ centered at a bigger circle which is centered at the origin with radius $R$. More precisely, for any $0<r<R$, the parametrzied surface of a torus is represented by $\partial M: \mathbb{X}(s,t)=\big(X(s,t), Y(s,t), Z(s,t) \big)$, where
\begin{align*}
\left\{
\begin{array}{rl}
X(s,t)&= (R + r \cos s )\cos t , \\
Y(s,t)&= (R + r \cos s)\sin t, \\
Z(s,t)&= -r \sin s,
\end{array}
\right.
\end{align*}for $0\leq s,t\leq 2\pi$, $r<R$. By \eqref{AB}, we have $A=r(R+r \cos s)\big(\cos s \sin t + \sin s\cos t(R+r\cos s )\big)$ and $B=r(R+r\cos s )\big(-\cos s\cos t+\sin s \sin t (R+r\cos s ) \big)$. By \eqref{volume}, the volume of the torus is
\begin{align*}
V(M)&=\frac{1}{2}\int_0^{2\pi}\int_0^{2\pi} (-XB+YA) ds dt = \frac{1}{2}\int_0^{2\pi}\int_0^{2\pi} r \cos s (R+r\cos s)^2 ds dt =2 \pi^2 r^2 R.
\end{align*}
We can also obtain the (Euclidean) volume by Pappus-Guldin theorem, that is,
\begin{align*}
V(M)&=(\text{area of smaller disk})\times (\text{length of the center travelling around})\\
&=(\pi r^2)\cdot (2\pi R)=2\pi^2r^2 R.
\end{align*}
It shows that the volumes from different approaches coincide with each other.
\end{example}

An immediate application of Corollary \ref{divcoro} is that an isoperimetric-type inequality for the domain $M$ with boundary $\partial M$ can be obtained. Indeed, by applying the Cauchy–Schwarz inequality:  \begin{align}\label{cauchy}
-XB+AY\leq \sqrt{X^2+Y^2}\sqrt{A^2+B^2}
\end{align}
and using the Proposition \ref{pareaform1}, we have
$$\frac{V(M)}{\mathcal{A}(\partial M)}\leq \frac{1}{2}\Big( \sup_{{(x,y,z)\in \partial M}}\sqrt{x^2+y^2}\Big).$$
Note that the left-hand side above is invariant under pseudo-hermitian transformations. Denote $PSH_0(1)=\{g\in PSH(1)\ | gM \ni (0,0,0)\}$ by the subset of the group of pseudo-hermitian transformations $PSH(1)$ such that the origin $(0,0,0)$ is contained in $gM$ for any $g\in PSH(1)$ acting on $M$. Taking the infimum over all elements in $PSH_0 (1)$ gives us the lower bound of the maximal distance to the $z$-axis. We summarize all discussions above as the following corollary.

\begin{corollary}\label{isoperi}
For any domain $M$ in $\mathbb{H}_1$ with boundary $\partial M$. There holds an isoperimetric type inequality
\begin{align}\label{isoperi2}
V(M)\leq \frac{1}{2}\Big( \sup_{{(x,y,z)\in \partial M}}\sqrt{x^2+y^2} \Big) \cdot \mathcal{A}(\partial M).
\end{align}
In addition, denote $PSH_0(1)=\{g\in PSH(1)\ | gM \ni (0,0,0)\}$ by the subset of the group $PSH(1)$ of the pseudo-hermitian transformations, then we have
$$\frac{V(M)}{\mathcal{A}(\partial M)}\leq \frac{1}{2}\inf_{g\in PHS_0(1)}\Big(  \sup_{{(x,y,z)\in \partial gM}}\sqrt{x^2+y^2}  \Big).$$
\end{corollary}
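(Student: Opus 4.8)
The plan is to derive the inequality directly from the boundary representation of the volume in Corollary~\ref{divcoro}, a pointwise Cauchy--Schwarz estimate, and the p-area formula of Proposition~\ref{pareaform1}. I would parametrize $\partial M$ as $\mathbb{X}(s,t)=\big(X(s,t),Y(s,t),Z(s,t)\big)$ over a domain $D\subset\mathbb{R}^2$ (working away from the singular set, which carries no area or volume mass), and invoke \eqref{volume} in the form $V(M)=\frac{1}{2}\big|\iint_D(-XB+YA)\,dsdt\big|\le\frac{1}{2}\iint_D|{-XB+YA}|\,dsdt$, where $A,B$ are given by \eqref{AB}.

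Next comes the pointwise estimate \eqref{cauchy}: reading $-XB+YA$ as the Euclidean inner product of the planar vectors $(Y,-X)$ and $(A,B)$, the Cauchy--Schwarz inequality gives $|{-XB+YA}|\le\sqrt{X^2+Y^2}\,\sqrt{A^2+B^2}$ at each point of $D$. Bounding $\sqrt{X^2+Y^2}$ by its supremum over $\partial M$ (finite since $M$ is bounded) and pulling this constant out of the integral, the remaining integrand $\sqrt{A^2+B^2}\,dsdt$ is precisely the p-area form $d\Sigma_p$ by \eqref{pareaform}, so $\iint_D\sqrt{A^2+B^2}\,dsdt=\mathcal{A}(\partial M)$ by \eqref{parea}. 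Assembling these inequalities yields \eqref{isoperi2}.

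For the pseudo-hermitian invariant form, the key point is that both $V(M)$ and $\mathcal{A}(\partial M)$ are invariant under $PSH(1)$, as recorded after \eqref{parea0}; hence $V(M)/\mathcal{A}(\partial M)$ is a $PSH(1)$-invariant of $M$. Applying \eqref{isoperi2} to $gM$ for an arbitrary $g\in PSH_0(1)$, and using $\partial(gM)=g(\partial M)$, gives $V(M)/\mathcal{A}(\partial M)=V(gM)/\mathcal{A}(\partial gM)\le\frac{1}{2}\sup_{(x,y,z)\in\partial gM}\sqrt{x^2+y^2}$, and taking the infimum over all such $g$ finishes the proof. I do not expect any serious obstacle: the only points requiring care are the sign bookkeeping in \eqref{volume} (handled by the absolute values above) and the reduction to the regular set so that the representations of Corollary~\ref{divcoro} and Proposition~\ref{pareaform1} are valid.
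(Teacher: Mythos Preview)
Your proposal is correct and follows essentially the same approach as the paper: the volume representation from Corollary~\ref{divcoro}, the pointwise Cauchy--Schwarz bound $|{-XB+YA}|\le\sqrt{X^2+Y^2}\sqrt{A^2+B^2}$, and the p-area formula from Proposition~\ref{pareaform1}, followed by the $PSH(1)$-invariance of $V(M)/\mathcal{A}(\partial M)$ to pass to the infimum. Your write-up is in fact slightly more careful than the paper's own argument in that you explicitly track the absolute value in~\eqref{volume} and mention working off the singular set.
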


\begin{remark}
The author point out that the result in Corollary \ref{isoperi} is unsatisfied since the inequality \eqref{isoperi2} is too rough to achieve the sharp case. Actually, it can be showed that when the equality holds for \eqref{isoperi2}, equivalently the identity holds for Cauchy-Schwarz inequality \eqref{cauchy}, the domain $M$ is a cylinder and the enclosed domain is an empty set, which violates the assumption of $M$.
\end{remark}

\section{The proof of Theorem \ref{mainvolume} and Corollary \ref{mainvolume2}}\label{mainvolumemainvolum2}
\begin{proof}[\textbf{Proof of Theorem \ref{mainvolume}}]
Let
\begin{align}\label{surface}
\mathbb{X}(s,t)=\gamma + fU +g V+g T
\end{align} be the parameterized surface for $(s,t)\in [0,s_0]\times [0,t_0]$ such that $\gamma$ is a horizontally regular curve with horizontal arc-length as before. Take the derivatives w.r.t. the parameters $s$ and $t$ respectively and use the Frenet frame formula \eqref{Frenet} to have
\begin{align}
\mathbb{X}_s&=\gamma' + f U'+gV' \label{xsformula}\\
&=(U +\tau T) + f(\kappa V) +g(-\kappa U -T) \nonumber \\
&=(1-\kappa g)U+\kappa f V+(\tau -g)T, \nonumber \\
\mathbb{X}_t&=f' U+ g'V +h'T. \label{xtformula}
\end{align}
Since $U=x'\mathring{e}_1+y'\mathring{e}_2$ and $V=-y'\mathring{e}_1+x'\mathring{e}_2$, by the cross product \eqref{Hcrossproduct} we can have
\begin{align}
\mathbb{X}_s\bar\times \mathbb{X}_t &=\big(\kappa h'f -g'(\tau-g)\big) U+\big(-h'(1-\kappa g)+f'(\tau -g)\big)V +\big((1-\kappa g)g' -\kappa f f'\big)T  \label{xstimesxt} \\
&=[\big(\kappa h' f -g'(\tau -g)\big) x'+\big(h'(1-\kappa g)-f'(\tau -g) \big)y']\mathring{e}_1  \nonumber \\
&\hspace{2cm}+ [\big( \kappa h'f-g'(\tau -g) \big)y' +\big( -h'(1-\kappa g)+f'(\tau-g) \big)x']\mathring{e}_2 \nonumber \\
&\hspace{2.5cm} + \big((1-\kappa g)g' -\kappa f f'\big)T.
\end{align}
By Lemma \ref{ABinner}, one gets
\begin{align}
A&=\big( -h'(1-\kappa g)+f'(\tau-g) \big)x'+\big( \kappa h'f-g'(\tau -g) \big)y':=\alpha x' +\beta y',  \label{volumeAB} \\
B&=\big(-\kappa h' f +g'(\tau -g)\big) x'+\big(-h'(1-\kappa g)+f'(\tau -g) \big)y':=-\beta x' +\alpha y'.\nonumber
\end{align}
To obtain the volume of $M$, we shall use Corollary \ref{divcoro}. By \eqref{surface}, we write the components $X,Y$ of the surface $\mathbb{X}=(X,Y,Z)$ in terms of $\gamma$, $\gamma', f$ and $g$, namely, $X=x+fx'-gy'$ and $Y=y+fy'+gx'$. Substitute $X,Y$ into \eqref{volume} and use \eqref{volumeAB} to have
\begin{align*}
-XB+YA&=\beta (xx'+yy')+\alpha (yx'-xy')+\beta f +\alpha g.
\end{align*}
Here we have used the arc-length condition for $\gamma$: $(x')^2+(y')^2=1$. Expand the right-hand side  above then we have reached
\begin{align}\label{uglyform}
2 Volume(M)&=\int_0^{s_0}\int_0^{t_0}  \Big(\beta (xx'+yy')+\alpha (yx'-xy')+\beta f +\alpha g\Big)  dsdt\\
&=\iint \kappa (xx'+yy')h'f -\iint(xx'+yy')\tau g' -(xx'+yy')gg'-\iint (yx'-y'x)h'\nonumber  \\
&+\iint (yx'-y'x)\kappa h'g +\iint (yx'-y'x)\tau f' -\iint (yx'-y'x)f'g +\iint \kappa h'f^2 \nonumber \\
&-\iint \tau g'f +\iint gg'f -\iint h'g +\iint \kappa g^2 h' +\iint\tau f'g -\iint f'g^2 \nonumber  \\
&=I_1 +\cdots +I_{13}, \nonumber
\end{align}
and each double integral (here and hereafter) is taken over the domain $[0, s_0]\times [0, t_0]$. Note that each of these integrals contains terms of the form $H(s)J(t)$, and each such case we can write
\begin{align}\label{HJ}
\iint H(s)J(t) dsdt = \int_0^{s_0}\int_0^{t_0} H(s)J(t)dsdt = \Big(\int_0^{s_0} H(s)ds\Big)\Big(\int_0^{t_0} J(t)dt\Big).
\end{align}
Furthermore it frequently happens the factor on the right side of \eqref{HJ} is zero, because of the closedness and smoothness of the curve $\mathcal{C}_s$. Thus, we have $I_2, I_3$, and $I_5$ in \eqref{uglyform} are all zero. Moreover, since $\tau=z'-x'y+xy'$ by definition, $I_6+I_{12}=\iint (-yx'+y'x+\tau)f'g=\iint (2\tau -z')f'g$ and we complete the proof.
\end{proof}

\begin{proof}[\textbf{Proof of Corollary \ref{mainvolume2}}]
When the p-curvature $\kappa\equiv 0$ and $f\equiv 0$, by \eqref{uglyform} the volume of $M$ is given by
\begin{align}\label{uglyform2}
Volume(M)=-\frac{1}{2}\int_0^{s_0} \int_0^{t_0} h'g dsdt =-\frac{s_0}{2} \int_0^{t_0} h'g dt.
\end{align}
By assumption $f\equiv 0$, the solid of revolution $\mathbb{X}(s,t)=\gamma +gV+hT$ is obtained by moving the plane curve $gV+hT$ on the vertical plane (perpendicular to the $xy$-plane) along the curve $\gamma$. We observe that any vertical plane $V_p$ (perpendicular to the $xy$-plane) spanned by the vectors $V(s)$ and $T$ at the point $p=\gamma(s)$ can be obtained by moving some vertical plane $V_\mathcal{O}$ spanned by $e_2(\mathcal{O})$ and $T$ at the origin $\mathcal{O}$ to the point $p$ via the left translation $L_{\gamma(s)}$, namely, $V_p:=span_{p} \{V(s), T\}=L_{p} V_\mathcal{O}$. As shown in Example \ref{ex1}, any plane region $\mathcal{R}$ on the vertical plane has the same p-area $\mathcal{A}$ and $\mathbb{R}^2$-area $\mathcal{A}_{\mathbb{R}^2}$. Therefore, by the invariant property of p-areas and $\mathbb{R}^2$-areas, we have
$$\frac{\mathcal{A}(\mathcal{R}\subset V_p)}{\mathcal{A}_{\mathbb{R}^2}(\mathcal{R}\subset V_p)}=\frac{\mathcal{A}(\mathcal{R}\subset V_0)}{\mathcal{A}_{\mathbb{R}^2}(\mathcal{R}\subset V_0)}=1,$$
for any point $p\in \gamma$. Thus, using Green's theorem, the right-hand side of \eqref{uglyform2} becomes to

$$-\frac{s_0}{2} \int_0^{t_0} h'g dt=-\frac{s_0}{2} \oint_{\partial \mathcal{R}} g dh
=\frac{s_0}{2} \iint_\mathcal{R} dYdZ
=\frac{s_0}{2} \cdot \mathcal{A}_{\mathbb{R}^2}(\mathcal{R})
=\frac{s_0}{2} \cdot\mathcal{A}(\mathcal{R}),$$
where the orientation of the boundary $\partial \mathcal{R}$ is chosen such that the p-area is positive (the second identity above).
\end{proof}

\begin{remark}We interpret the geometric assumptions for  $\kappa\equiv 0$ and $f(t)\equiv 0$ as follows: according to \eqref{pcurve}, the p-curvature of any horizontally regular curve $\gamma$ is the usual (Euclidean) curvature of the projection of $\gamma$ onto the $xy$-plane, and so by $\kappa\equiv 0$ we mean the projection of $\gamma$ onto the $xy$-plane is a line. Besides, it is also natural to assume that $f\equiv 0$ since the tangential contribution of the tube-shaped domain has been considered by the curve $\gamma$ and hence one shall ignore the other tangential influence made by the vector $U$.
\end{remark}

\section{The proof of Theorem \ref{mainarea} and Corollary \ref{pappsarea}}\label{mainareapappsarea}
Before proving Theorem \ref{mainarea}, we show some observations when we started trying to prove the analogous Pappus-Guldin theorem in $\mathbb{H}_1$. We will show that any cylindrical surface, whose generatrix is along the $z$-axis and its directrix is a closed plane curve perpendicular to the $z$-axis, satisfies Pappus-Guldin theorem for areas. We point out that the horizontal length of any plane curve $\gamma(s)=\big( f(s), g(s), t\big)$ on the plane $\{t=const.\}$ is exactly same as the usual Euclidean length of $\gamma(s)$.

\begin{proposition}\label{observ1}
Let $\Sigma: \mathbb{X}(s,t)=\big(R(t)\cos(s), R(t)\sin(s), h(t)\big)$ be a surface of revolution in the Heisenberg group $\mathbb{H}_1$ generated by rotating the curve $\{(R(t), 0, h(t))\}$ about the $z$-axis for $s\in [0, 2\pi]$ and $t\in [0,t_0]$. Then the p-area of $\Sigma$ is given by
\begin{align*}
\mathcal{A}(\Sigma)=\int_0^{t_0} \int_0^{2\pi} |R(t)|\Big( \big(h'(t)\big)^2+\big(R(t)R'(t)\big)^2\Big)^{1/2} dsdt.
\end{align*}
In particular, when $R(t)=R$, a nonzero positive constant, and $h(t)=t$, we obtain the p-area for the standard cylinder
\begin{align*}
\mathcal{A}(\Sigma)&=2\pi R \cdot  t_0\\
&=(perimeter \ of \ the \ cross \ section)\cdot (length \ of\  the\  axis),
\end{align*}which satisfies Pappus-Guldin theorem.
\end{proposition}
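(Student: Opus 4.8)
The plan is to apply Proposition \ref{pareaform1} directly to the parametrization $\mathbb{X}(s,t)=\big(R(t)\cos s,\,R(t)\sin s,\,h(t)\big)$, simplify the integrand $\sqrt{A^2+B^2}$, and then specialize the constants for the last assertion. There is no conceptual difficulty here; the argument is a controlled computation, and the only subtlety worth flagging is the role of singular points.

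First I would record the components $X=R\cos s$, $Y=R\sin s$, $Z=h$ together with the first partials $X_s=-R\sin s$, $X_t=R'\cos s$, $Y_s=R\cos s$, $Y_t=R'\sin s$, $Z_s=0$, $Z_t=h'$. The quantity $X_tY_s-X_sY_t$ that enters both $A$ and $B$ in \eqref{AB} collapses to $RR'$ by the identity $\sin^2 s+\cos^2 s=1$. Substituting into \eqref{AB} gives $A=Rh'\sin s+R^2R'\cos s$ and $B=-Rh'\cos s+R^2R'\sin s$. (Alternatively one may obtain $A,B$ via Lemma \ref{ABinner} by expanding $\mathbb{X}_s\bar\times\mathbb{X}_t$, but the direct route through \eqref{AB} is shortest here.)

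Next I would form $A^2+B^2$. The mixed terms $\pm 2R^3h'R'\sin s\cos s$ cancel, and grouping the survivors with $\sin^2 s+\cos^2 s=1$ yields $A^2+B^2=R^2\big((h')^2+(RR')^2\big)$, hence $\sqrt{A^2+B^2}=|R|\big((h')^2+(RR')^2\big)^{1/2}$. Feeding this into the p-area formula \eqref{parea} over $D=[0,2\pi]\times[0,t_0]$ produces the stated expression for $\mathcal{A}(\Sigma)$. For the last claim, putting $R(t)\equiv R>0$ (so $R'=0$) and $h(t)=t$ (so $h'=1$) reduces the integrand to the constant $R$, and the double integral equals $2\pi R\,t_0$, i.e. the perimeter $2\pi R$ of the cross-sectional circle times the length $t_0$ of the axis segment, which is precisely the form of Theorem B.

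The main obstacle, such as it is, is purely bookkeeping: Proposition \ref{pareaform1} is stated at nonsingular points, so I would note that $\sqrt{A^2+B^2}=|R|\big((h')^2+(RR')^2\big)^{1/2}$ vanishes only where the circle $\{t=\mathrm{const.}\}$ degenerates (namely $R(t)=0$, or $h'(t)=R'(t)=0$), which is a set of measure zero, so the p-area form extends continuously there and the integral is unaffected. In the cylinder case $R(t)\equiv R>0$, $h(t)=t$, the surface has no singular points at all (the Euclidean surface normal $(x,y,0)$ is never parallel to the contact normal $(-y,x,1)$), so the formula is valid pointwise on all of $\Sigma$.
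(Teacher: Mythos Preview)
Your proposal is correct and follows essentially the same route as the paper: compute $A$ and $B$ from \eqref{AB} for the given parametrization, simplify $A^2+B^2$ using $\sin^2 s+\cos^2 s=1$, and then invoke Proposition~\ref{pareaform1}. Your write-up is in fact more detailed than the paper's (which merely records $A$, $B$ and asserts the result), and your added remark on singular points is a sensible elaboration not present in the original proof.
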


\begin{proof}
Denote by the components $X(s,t)=R(t)\cos(s), Y(s,t)=R(t)\sin(s), Z(s,t)=h(t)$ of the parametrized surface $\mathbb{X}(s,t)$ and by \eqref{AB} we calculate $A=R(t)h(t)' \sin(s)+R(t)^2R(t)'\cos(s)$ and $B=-Rh'\cos(s)+R^2R'\sin(s)$. Using Proposition \ref{pareaform1} to have the first result. In addition, by the assumptions $R(t)=R$ and $h(t)=t$, the second result follows immediately.
\end{proof}

In general, Pappus-Guldin theorem holds for all cylindrical surfaces with the rotating axis parallel to the $z$-axis.
\begin{proposition}\label{observ2}
Any cylindrical surface with the axis parallel to the $z$-axis in the Heisenberg group $\mathbb{H}_1$ satisfies Pappus-Guldin theorem. The cylindrical surface defined by
$$\Sigma: \mathbb{X}(s,t)= \{\big(f(s), g(s), t\big)| \forall (s,t)\in [0, 2\pi]\times [0,t_0]\},$$ for some real numbers $t_0>0$ and $C^1$-functions $f(t),g(t)$, has the p-area
$$\mathcal{A}(\Sigma)=(\text{perimeter of the closed plane curve generated by } f \text{ and } g)\cdot t_0.$$
\end{proposition}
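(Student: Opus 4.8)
The plan is to apply Proposition~\ref{pareaform1} to the explicit parametrization $\mathbb{X}(s,t)=\big(f(s),g(s),t\big)$ and simply read off the answer. First I would record the component functions $X(s,t)=f(s)$, $Y(s,t)=g(s)$, $Z(s,t)=t$ together with their partials: $X_s=f'(s)$, $Y_s=g'(s)$, $Z_s=0$, while $X_t=Y_t=0$ and $Z_t=1$. Feeding these into the definitions \eqref{AB} of $A$ and $B$, both mixed terms $X(X_tY_s-X_sY_t)$ and $Y(X_tY_s-X_sY_t)$ drop out because $X_t=Y_t=0$, leaving $A=-f'$ and $B=-g'$, so that $\sqrt{A^2+B^2}=\sqrt{(f')^2+(g')^2}$, a function of $s$ alone.

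Next, by the p-area formula \eqref{parea},
\begin{align*}
\mathcal{A}(\Sigma)=\int_0^{t_0}\int_0^{2\pi}\sqrt{(f'(s))^2+(g'(s))^2}\,ds\,dt=t_0\int_0^{2\pi}\sqrt{(f'(s))^2+(g'(s))^2}\,ds,
\end{align*}
since the integrand is independent of $t$. The remaining one-variable integral is precisely the Euclidean length of the closed plane curve $s\mapsto\big(f(s),g(s)\big)$, which, as pointed out just before the statement, agrees with the horizontal length of the directrix $\big(f(s),g(s),t\big)$ lying in the horizontal plane $\{z=t\}$. Hence $\mathcal{A}(\Sigma)$ equals the perimeter of that closed directrix curve multiplied by $t_0$, the length of the axis, which is exactly the Pappus-Guldin form; in particular this recovers the constant-radius cylinder case of Proposition~\ref{observ1}.

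The only subtlety worth flagging is the hypothesis in Proposition~\ref{pareaform1} that $p$ be nonsingular. Here the tangent plane of $\Sigma$ always contains $\mathbb{X}_t=T$, and $T$ is never tangent to the contact distribution $\xi$, so $\Sigma$ carries no singular points and the p-area form is valid at every point (equivalently $A^2+B^2=(f')^2+(g')^2$ is positive wherever the directrix is immersed, and any exceptional parameter values form a set of measure zero, hence do not affect the area integral). Apart from this observation the argument is a short direct computation, so I do not anticipate any genuine obstacle.
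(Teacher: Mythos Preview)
Your argument is correct and follows essentially the same route as the paper: both proofs plug the parametrization into \eqref{AB}, obtain $A=\pm f'(s)$, $B=\pm g'(s)$ (the paper drops the minus sign, which is harmless for $\sqrt{A^2+B^2}$), and then invoke \eqref{parea} to factor the integral. Your added remark ruling out singular points via $\mathbb{X}_t=T\notin\xi$ is a nice touch that the paper omits.
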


\begin{proof}
By \eqref{AB}, a simple calculation shows that $A=f'(t)$ and $B=g'(t)$. Thus, one has that the p-area $\mathcal{A}(\Sigma)=\int_0^{t_0} \int_0^{2\pi} \sqrt{\big(f'(s)\big)^2 +\big(g'(s)\big)^2} dsdt$ by \eqref{parea}, and the result follows.
\end{proof}

Now we derive a formula for p-area forms for the parametrized surfaces in $\mathbb{H}_1$.

\begin{proof}[\textbf{Proof of Theorem \ref{mainarea}}]
The proof actually has been shown in the content of Theorem \ref{mainvolume}. Let $\Sigma: \mathbb{X}(s,t)=\gamma+fU+gV+hT$ be the parametrized surface as the previous sections. By the calculations for $\mathbb{X}_s, \mathbb{X}_t$, and $\mathbb{X}_s\bar\times \mathbb{X}_t$ (use \eqref{xsformula}, \eqref{xtformula}, \eqref{xstimesxt}) one gets the functions $A$ and $B$ as shown in \eqref{volumeAB}. Therefore,
\begin{align*}
A^2+B^2&=\alpha^2+\beta^2\\
&=(h')^2\big((\kappa f)^2 +(1-\kappa g)^2 \big)-2h'(\tau-g)\big(\kappa(fg'-f'g)+f'\big)+(\tau-g)^2\big( (f')^2+(g')^2\big),
\end{align*} where we have used the assumption of arc-length parameter for the curve $\gamma(s)$: $(x')^2+(y')^2=1$, and we complete the proof of the theorem.
\end{proof}

\begin{proof}[\textbf{Proof of Corollary \ref{pappsarea}}]
Substitute the function $h(t)$ by $h(t)\equiv 0$ into Theorem \ref{mainarea} with the assumption $(f')^2 +(g')^2=1$, we immediately have the result \eqref{pappasarea1}. It is also clear that the second result \eqref{pappasarea2} follows by the assumption.
\end{proof}

Two examples satisfying two conditions (1) and (2) in Corollary \ref{pappsarea} are provided respectively.

\begin{example}\label{examplereqornotone}
Let $\gamma(s)=\big( R \sin(\frac{s}{R}), -R \cos(\frac{s}{R}), (1-R)s \big)$ for any $R>0$, $s\in[0, 2\pi]$. Use \eqref{pcurve}, \eqref{contactnor}, we have $U(s)=(\cos(\frac{s}{R}), \sin(\frac{s}{R}), -R)$, $V(s)=(-\sin(\frac{s}{R}), \cos(\frac{s}{R}), 0)$, $\kappa\equiv \frac{1}{R}$, and $\tau\equiv 1$. Let $f(t)=a \sin(t)$, $g(t)=b \cos(t)$ for any $a, b\in \mathbb{R}\setminus\{0\}$, $t\in [0, 2\pi]$, $|a|>1$, $|b|\leq 1$ to avoid self-intersected. Then the parametrized surface defined by $\Sigma: \mathbb{X}(s,t)=\gamma(s)+f(t)U(s)+g(t)V(s)$ satisfies the assumption of Corollary \ref{pappsarea} (1) with the p-area $\mathcal{A}(\Sigma)=4 \pi^2$. Note that
\begin{enumerate}
\item  when $R=1$, the curve $\gamma$ is a circle on the $xy$-plane centered at the origin. The surface $\Sigma$ is a surface of revolution obtained by rotating an ellipse $\mathcal{C}_s=\big(f(t), -R+g(t), -Rf(t)\big)$ along the $z$-axis such that the center of $\mathcal{C}_s$ moves along $\gamma$ (see Fig. \ref{fig11} A).
\item  when $R\neq 1$, the third component of $\gamma$ makes the curve $\gamma$ as a helix and the surface $\Sigma$ is a tube-shaped obtained by moving the ellipse $\mathcal{C}_s$ along $\gamma$ (see Fig. \ref{fig11} B).
\end{enumerate}
\begin{figure}[ht!]
\minipage{0.48\textwidth}
  \includegraphics[width=\linewidth]{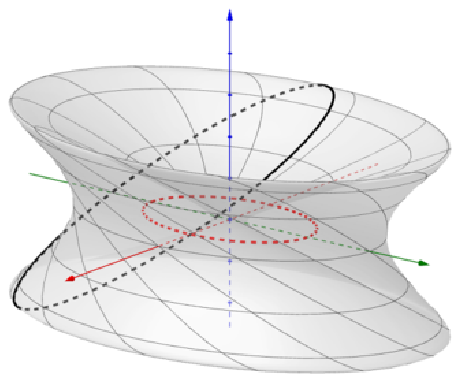}
  \caption*{A. $\gamma$ is a closed loop \\$(a=2.3, b=0.8)$}
\endminipage\hfill
\minipage{0.48\textwidth}%
  \includegraphics[width=\linewidth]{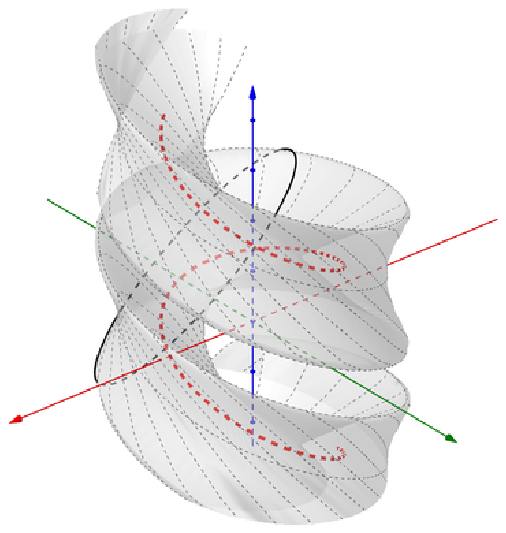}
  \caption*{B. $\gamma$ is not closed \\$(a=1.2, b=0.8)$}
\endminipage
\caption{The surfaces satisfy Pappus-Guldin theorem for areas (Example \ref{examplereqornotone}).}\label{fig11}
\end{figure}
\end{example}

\begin{example}\label{corollaryexample2}

For any numbers $a\in (-1,1)$ and $b \in \mathbb{R}$ suppose
$$\gamma_{a,b}(s)=\big(\sqrt{1-a^2} \ s, as +b , (1+b\sqrt{1-a^2}) s\big)$$
is the horizontally regular curve with horizontal arc-length. Since the projection of $\gamma$ onto the $xy$-plane is a line with slope $\frac{a}{\sqrt{1-a^2}}$, the p-curvature $\kappa\equiv 0$. We also have the contact normality $\tau\equiv 1$. Set $f(t)=t$, $g(t)\equiv 0$ and $h(t)=\sin(t)$ for $t\in [0, 2\pi]$, then those functions satisfy the assumptions in Corollary \ref{pappsarea} (2). In this case, we have the vectors $U(s)=\big(\sqrt{1-a^2}, a, b\sqrt{1 -a^2}\big)$, $V(s)=\big(-a, \sqrt{1-a^2}, -ab-s\big)$, and so the parametric surface is given by
\begin{align*}
\mathbb{X}(s,t)&=\gamma_{a,b}+fU+hT \\
&=\Big(\sqrt{1-a^2} (s+t), a(s+t)+b, b \sqrt{1-a^2}(s+t)+s+\sin(t) \Big).
\end{align*}
Note that the vector $U$ only depends on the number $a$ and the vector $V$ does not contribute to the surface, and so the surface $\Sigma$ is generated by the plane curve $\mathcal{C}_s: fU+hT$ along the line $\gamma_{a,b}$ as shown in Fig. \ref{fig2}.

\begin{figure}[ht!]
  \includegraphics[width=0.5 \textwidth]{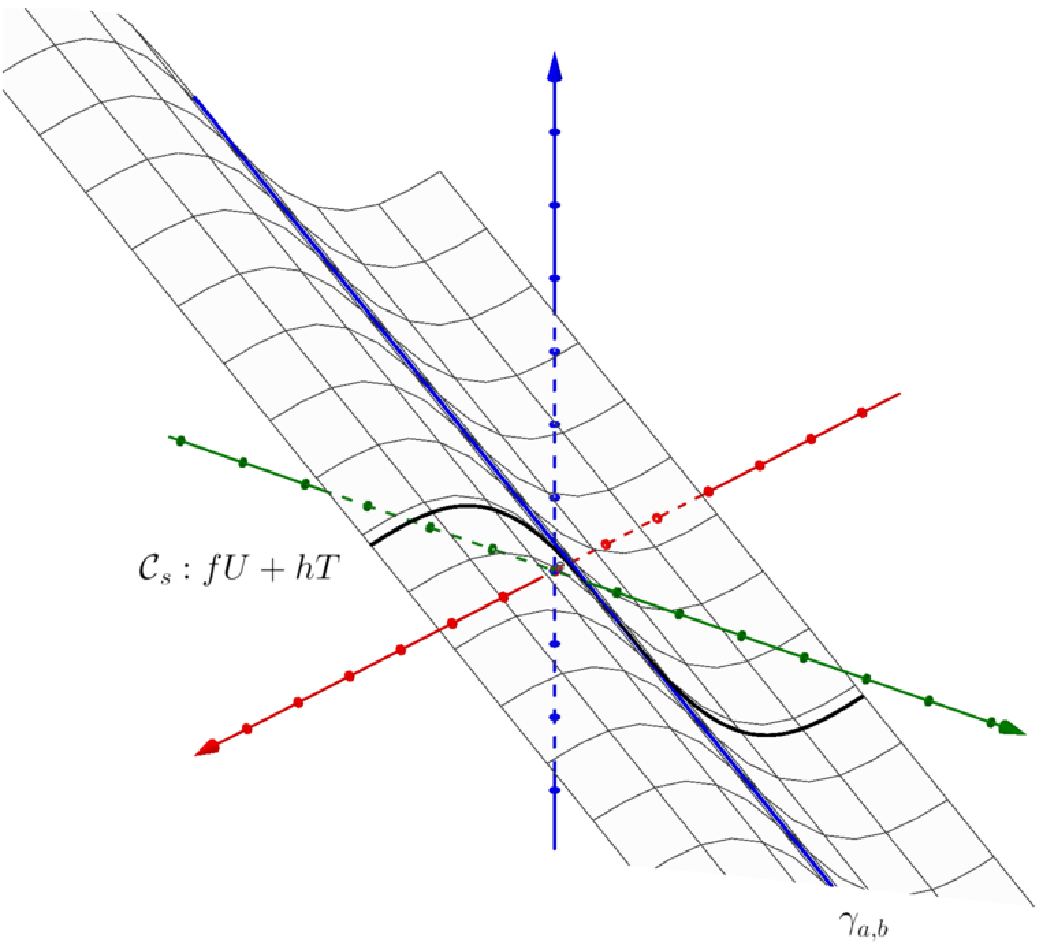}\label{fig21}
  \caption{A plane satisfies Pappus-Guldin theorem for areas (Example \ref{corollaryexample2} $a=-0.9, b=0.5$).}\label{fig2}
\end{figure}
\end{example}

%\begin{corollary}
%Suppose $\kappa(s)\equiv 1$. For any function $f(t)>0$ on $[0, M]$. The surface $\Sigma$ defined by satisfying the Pappas theorem and its p-area is
%\begin{align*}
%\mathcal{A}(\Sigma)=L\int_0^M h'(t)\sqrt{f^2(t)+\big(1-g^2(t)\big)}-g(t) dt + M\int_0^L \tau(s)ds.
%\end{align*}
%In particular, when $\tau(s)\equiv 1$, the p-area of $\Sigma$ is
%\begin{align*}
%\mathcal{A}(\Sigma)=L\int_0^M h'(t)\sqrt{f^2(t)+\big(1-g^2(t)\big)}-g(t) dt + LM.
%\end{align*}
%\end{corollary}

In \cite{CHL} the author and the coauthors introduced the concept of the \emph{normal coordinates}. More precisely, for any regular surface $\Sigma=\mathbb{X}(s,t)=\gamma(s)+f(t)U(s)+g(t)V(s)$, for any $(s,t)\in D\rightarrow  \mathbb{H}_1$ we can find the coordinates (called normal coordinates) such that the parametric equation of the surface satisfies that
\begin{enumerate}
\item $\mathbb{X}(D)$ is a surface without singular points,
\item $\mathbb{X}_s$ defines the characteristic foliation on $\mathbb{X}(D)$,
\item $|\mathbb{X}_s|=1$ on $D$ w.r.t. the Levi-metric.
\end{enumerate}
Next we ask if the surfaces in Corollary \ref{pappsarea} satisfying Pappus-Guldin theorem for areas can be the surface with constant p-mean curvature and the answer is positive. Recall that \cite[Sec. 2, p. 134]{CHMY} the p-mean curvature $H$ of any surface $\Sigma$ is defined to be the variation of the p-area along the vector field $e_2=Je_1$ where $e_1$ is the characteristic vector field $\Sigma$. The p-mean curvature can also be represented by the function $H$ satisfying $\nabla_{e_1}e_2=He_2$ (up to a sign) where $\nabla$ is the pseudo-hermitian connection. When using the normal coordinates, one has $H=\langle \mathbb{X}_{tt}, J\mathbb{X}_t\rangle$. Suppose $\gamma(s)$ is parametrized by the horizontal arc-length and $h(t)\equiv 0$. Then the surface in Corollary \ref{pappsarea} is given by $\mathbb{X}(s,t)=\gamma+fU+gV$ and we have the derivatives
\begin{align*}
\mathbb{X}_t&=f'U+g'V=(f'x'-g'y')\mathring{e}_1+(f'y'+g'x')\mathring{e}_2, \\
\mathbb{X}_{tt}&=(f''x'-g''y')\mathring{e}_1+(f''y'+g''y')\mathring{e}_2.
\end{align*}
Thus, the p-mean curvature
\begin{align}\label{hcondition}
H&=\langle \mathbb{X}_{tt},J \mathbb{X}_t\rangle\\
&=\Big((x')^2+ (y')^2 \Big)(-f''g'+g''f') \nonumber \\
&=-f''g'+g''f'. \nonumber
\end{align}
Therefore by adding the constraint, $-f''g'+g''f'=const.$, we obtain the constant p-mean curvature surface and we summarize the result as follows.

\begin{corollary}\label{minimalsurface}
When $h(t)\equiv 0$. For any functions $f(t), g(t), \kappa(s), \tau(s)$ satisfying $(f')^2+ (g')^2=1$ and $-f''g'+g''f'=C$ for some constant $C$, then the surface $\Sigma$ defined by
\begin{align}\label{hequi0}
\mathbb{X}(s,t)=\gamma(s)+f(t)U(s)+g(t)V(s)
\end{align} is a constant p-mean curvature with $H=C$ and its p-area is given by
\begin{align*}
\mathcal{A}(\Sigma)= \int_0^{s_0}\int_0^{t_0} |\tau(s)-g(t)| dt ds.
\end{align*}
In particular, if $\tau\equiv 1>g $ on $[0,s_0]\times [0,t_0]$ (resp. $\tau\equiv -1 <g$), then Pappus-Guldin theorem holds, namely,
\begin{align*}
\mathcal{A}(\Sigma)&= s_0t_0-s_0 \int_0^{t_0}g(t) dt.   \\
 (\text{resp. } &=s_0t_0+s_0 \int_0^{t_0} g(t) dt)
\end{align*}
\end{corollary}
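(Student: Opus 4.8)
The plan is to derive both halves of the statement by specializing results already in hand to the case $h(t)\equiv 0$: the p-mean curvature formula \eqref{hcondition} for the first half, and the p-area formula \eqref{pareaform2} of Theorem \ref{mainarea} for the second.

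For the constant p-mean curvature claim, the key preliminary observation is that $h\equiv 0$ is exactly what makes \eqref{hcondition} applicable. Indeed, by \eqref{xtformula} we then have $\mathbb{X}_t=f'U+g'V$, which is a \emph{horizontal} vector field because $U(s),V(s)$ span the contact plane along $\Sigma$, and $|\mathbb{X}_t|^2=(f')^2+(g')^2=1$ by hypothesis and the orthonormality of $\{U,V,T\}$; hence at every nonsingular point $\mathbb{X}_t$ is, up to sign, the unit characteristic vector $e_1\in T\Sigma\cap\xi$, so the $t$-coordinate curves are the characteristic curves parametrized by horizontal arc length and $(s,t)$ plays the role of the normal coordinates recalled above. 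Writing $U=x'\mathring{e}_1+y'\mathring{e}_2$ and $V=-y'\mathring{e}_1+x'\mathring{e}_2$ (valid since $\gamma'_\xi=U$ has $|\gamma'_\xi|=1$, i.e. $(x')^2+(y')^2=1$), differentiating twice in $t$ and pairing with $J\mathbb{X}_t$ exactly as in \eqref{hcondition} gives $H=\big((x')^2+(y')^2\big)\big(-f''g'+g''f'\big)=-f''g'+g''f'$, so the hypothesis $-f''g'+g''f'=C$ yields $H\equiv C$. No existence statement for $f,g$ is needed, since the corollary is conditional, and the singular set of the $C^2$ surface has measure zero so the pointwise identity suffices.

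For the p-area, I would substitute $h\equiv 0$, hence $h'\equiv 0$, into \eqref{pareaform2}: every term carrying a factor $h'$ drops out and the integrand collapses to $\big[(\tau-g)^2\big((f')^2+(g')^2\big)\big]^{1/2}=|\tau-g|$, again using $(f')^2+(g')^2=1$; integrating over $[0,s_0]\times[0,t_0]$ gives $\mathcal{A}(\Sigma)=\int_0^{s_0}\int_0^{t_0}|\tau(s)-g(t)|\,dt\,ds$, which is \eqref{pappasarea1}. The last assertion is then a two-line sign computation: when $\tau\equiv 1>g$ on the rectangle, $|\tau-g|=1-g$ and the double integral factors as $s_0\int_0^{t_0}(1-g)\,dt=s_0t_0-s_0\int_0^{t_0}g\,dt$; when $\tau\equiv -1<g$, $|\tau-g|=g+1$ gives $s_0t_0+s_0\int_0^{t_0}g\,dt$, both of Pappus--Guldin form. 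The only point requiring any care is the coordinate identification in the second paragraph — pinning down that $h\equiv 0$ together with $(f')^2+(g')^2=1$ really places us in (a relabelling of) the normal coordinates so that $H=\langle\mathbb{X}_{tt},J\mathbb{X}_t\rangle$, and in particular that the characteristic direction is $\mathbb{X}_t$, not $\mathbb{X}_s$; after that everything is substitution into already-proved formulas and I do not expect a genuine obstacle.
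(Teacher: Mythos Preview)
Your proposal is correct and follows essentially the same route as the paper: the p-mean curvature claim is exactly the computation \eqref{hcondition} carried out in the paragraph preceding the corollary, and the p-area claim is the specialization $h'\equiv 0$, $(f')^2+(g')^2=1$ of Theorem \ref{mainarea}, i.e.\ Corollary \ref{pappsarea}(1). If anything you are more explicit than the paper in checking that $h\equiv 0$ together with $(f')^2+(g')^2=1$ makes $\mathbb{X}_t$ the unit characteristic direction (so that the normal-coordinate formula $H=\langle\mathbb{X}_{tt},J\mathbb{X}_t\rangle$ is legitimate with the roles of $s$ and $t$ swapped relative to the abstract definition); the paper simply asserts this and proceeds.
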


Note that if the surface $\Sigma$ defined by \eqref{hequi0} is a p-minimal surface (namely, $H=0$), it does not bound any solid region. Indeed, by \eqref{hcondition} the condition for being a p-minimal surface is that $H=-f''g'+g''f'=0$, and the condition holds if and only if $f=C_1 g+C_2$ for some constants $C_1, C_2$. Thus, the curve $\mathcal{C}_s(t)=\gamma+f(t)U+g(t)V$ degenerates to a line segment such that the surface $\Sigma$ generated by $\mathcal{C}_s(t)$ does not bound any solid region. The result has been observed in \cite[Sec. 4]{CHMY}. The authors in that paper showed that the p-minimal surfaces are classical ruled surfaces with the ruled generated by Legendrian lines and clearly the ruled surfaces do not enclose any bounded region.

\bibliography{mybib}
\end{document}